\documentclass[11pt]{amsart}

\usepackage{amscd}
\usepackage{amsmath,amsfonts,amssymb,amsthm}
\newcommand{\cal}{\mathcal}
\makeindex
\usepackage{color}
\usepackage{xcolor}
\usepackage[utf8]{inputenc}
\ifx\pdfpagewidth\undefined 
\usepackage[T1]{fontenc}
\else 
\usepackage[OT1]{fontenc} 
\fi 
\usepackage{amssymb,amsfonts}
\usepackage{eurosym}
\parskip=3pt
\usepackage{vmargin}
\setmarginsrb{21mm}{7mm}{21mm}{11.5mm}%
        {21mm}{5mm}{21mm}{11.5mm}
\makeindex

\newtheorem{deff}{Definition}[section]
\newtheorem{lemma}[deff]{Lemma}

\newtheorem{theorem}[deff]{Theorem}

\newtheorem{corollary}[deff]{Corollary}


\newtheorem{proposition}[deff]{Proposition}

\newtheorem{notation}[deff]{Notation}

\newtheorem{fact}[deff]{Fact}

\newtheorem{em-Beispiel}[deff]{\textcolor{red}{Beispiel}}
\newtheorem{em-beispiel}[deff]{Beispiel}
\newtheorem{em-beispiele}[deff]{Beispiele}
\newtheorem{em-def}[deff]{Definition}        
 \newtheorem{em-Def}[deff]{\textcolor{green}{Definition}}
\newtheorem{em-remark}[deff]{Remark}         
\newtheorem{em-bemerkungen}[deff]{Bemerkungen}
\newtheorem{em-Bemerkung}[deff]{\textcolor{red}{Bemerkung}}
\newtheorem{em-question}[deff]{Question}
\newtheorem{em-questions}[deff]{Questions}

\newtheorem{problem}[deff]{Problem}

\theoremstyle{definition}
\newtheorem{example}[deff]{Example}

\newcommand{\NB}{$\clubsuit$}
\newcommand{\NL}{$\spadesuit$}

\newcommand\uu{{\mathbf u}}

\newcommand\vv{{\mathbf v}}

\definecolor{agbgreen}{rgb}{0.0, 0.5, 0.0}

\renewcommand{\phi}{\varphi}

\newenvironment{remark}{\begin{em-remark} \em }{\end{em-remark}}

\newenvironment{question}{\begin{em-question}\em }{\end{em-question}}
\newenvironment{questions}{\begin{em-questions}\em }{\end{em-questions}}

\renewcommand{\P}{\mathbb P}

\newcommand{\R}{\mathbb R}

\newcommand{\Z}{\mathbb Z}
\newcommand{\T}{\mathbb T}
\newcommand{\N}{\mathbb N}

\renewcommand{\phi}{\varphi}
\renewcommand{\theta}{\vartheta}

\newcommand{\FF}{\mathcal F}
\newcommand{\I}{\mathcal I}
\newcommand{\VV}{\mathcal V}

\numberwithin{equation}{section}

\title{The Protasov-Zelenyuk topology and ideal convergence}

\author{Lydia Au\ss{}enhofer}
\address{Universit\"at Passau, Innstr. 33, D-94032 Passau, ORCID: 0000-0003-1269-4735}
\email{lydia.aussenhofer@uni-passau.de}

\author{Dikran Dikranjan}
\address{Universit\`a di Udine, DMIF, Via delle Scienze 206, 33100 Udine, ORCID: 0000-0002-1159-9958}
\email{dikran.dikranjan@uniud.it}

\author{Anna Giordano Bruno}
\address{Universit\`a di Udine, DMIF, Via delle Scienze 206, 33100 Udine, ORCID: 0000-0003-3431-2240}
\email{anna.giordanobruno@uniud.it}

\date{\today}
\newcommand{\dis}{\displaystyle}
\begin{document}

\maketitle

\begin{abstract}
The so-called $T$-sequences $\uu$ in a group $G$, and the related finest Hausdorff group topology $T_\uu$ on $G$ that makes $\uu$ a null sequence, were introduced by Protasov and Zelenyuk 35 years ago and since then they became a fundamental tool in the field of topological groups.

More recently, in the abelian case, the subfamily of $T$-sequences called $TB$-sequences was introduced, as well as the finest precompact group topology $T_{\mathbf{pu}}$ that makes $\uu$ a null sequence.

Here we study the counterpart of all these notions with respect to ideal convergence in place of the classical notion of convergence of a sequence.
Also, we study their relation to the already established field of ${\cal I}$-characterized subgroups of compact abelian groups.
\end{abstract}

\medskip\medskip

{\footnotesize
\noindent  {\sf Mathematics Subject Classification 2020:}
   11B05, 
  20K45, 22A05, 22C05,
  54H11 

\noindent  {\sf Keywords:} ideal convergence, $T$-sequence, $TB$-sequence, characterized subgroup, $\I$-characterized subgroup}
\bigskip

\maketitle

\tableofcontents



\section{Introduction}

Let $G$ be an abelian group. In the sequel we write a single boldface character $\uu$ to denote a sequence $\uu = (u_n)_{n\in\N_0}$
when no danger of confusion exists.

\subsection{$T$- and $TB$-sequences}\hfill

We start recalling the following fundamental notions.

\begin{deff}\label{Tdef}
A sequence $\uu$ in an abelian group $G$ is called a {\bf $T$-sequence} (resp., {\bf $TB$-sequence}), if there is a Hausdorff (resp., precompact) group topology $\tau$ on $G$ such that $u_n{\longrightarrow}0$ in $(G,\tau)$.
\end{deff}

If $\uu$ is a $T$-sequence (resp., $TB$-sequence) in an abelian group $G$, then there exists a finest Hausdorff group topology $T_{{\bf u}}$ (resp., a finest precompact group topology $T_{\mathbf p\uu}$) on $G$ in which $\uu$ converges to $0$, namely, the supremum of all Hausdorff (resp., precompact) group topologies with this property. Clearly, $T_{\mathbf p\uu}\subseteq T_\uu$ for any $TB$-sequence $\uu$.

The correspondence $\uu \mapsto T_\uu$ is monotone decreasing, i.e., if ${\bf u}$ is a $T$-sequence and ${\bf v}\sqsubseteq {\bf u}$ is a subsequence of ${\bf u}$, then also ${\bf v}$ is a $T$-sequence and $T_{{\bf v}}\supseteq T_{{\bf u}}$. The same holds for $\uu\mapsto T_{\mathbf p\uu}$.

For an abelian group $G$ and a subgroup $H$ of $\mathrm{Hom}(G,\T)$, denote by $\sigma(G,H)$ the topology on $G$ generated by the characters of $H$, which is totally bounded. Then $H=(G,\sigma(G,H))^\wedge$~\cite[Proposition~11.3.10]{ITG}.
One can prove that (e.g., see~\cite{Fusion})
\begin{equation}\label{wedgeq}
T_{\mathbf p\uu}=\sigma(G,(G,T_{\mathbf p\uu})^\wedge)=\sigma(G,(G,T_{\uu})^\wedge).
\end{equation}

Since every precompact group topology is Hausdorff, every $TB$-sequence is a $T$-sequence. In~\cite{PZ} (cf.~also~\cite[14.4.18]{ITG}), an example of a $T$-sequence which is not a $TB$-sequence is given.
In particular, $\uu$ is not a $TB$-sequence, when $(G, T_{\uu})$ is minimally almost periodic.
In~\cite{Fusion} we present a new class of $T$-sequences $\uu$ in $\Z$ for which the group topology $T_\uu$ is minimally almost periodic, so $\uu$ cannot be a $TB$-sequence.

\begin{theorem}\label{ExTs}{\rm~\cite[Example 1.13(d)]{DGT}}
In every infinite abelian group there exists a non-trivial $TB$-sequence.
\end{theorem}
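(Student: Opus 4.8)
The plan is to work dually. Recall that the precompact (totally bounded Hausdorff) group topologies on $G$ are exactly the topologies $\sigma(G,H)$ for subgroups $H\le\mathrm{Hom}(G,\T)$, such a topology being Hausdorff precisely when $H$ separates the points of $G$. Since $u_n\to 0$ in $\sigma(G,H)$ means exactly that $\chi(u_n)\to 0$ in $\T$ for every $\chi\in H$, a sequence $\uu$ is a $TB$-sequence if and only if the subgroup
\[
\Gamma_\uu:=\{\chi\in\mathrm{Hom}(G,\T):\chi(u_n)\to 0\text{ in }\T\}
\]
separates the points of $G$: if some separating $H$ makes $\uu$ null then $H\subseteq\Gamma_\uu$, and conversely $H=\Gamma_\uu$ works. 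Thus the whole problem becomes: produce an injective sequence $\uu$ for which $\Gamma_\uu$ is point-separating.

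First I would isolate a reduction lemma: if $H\le G$ and $\uu$ lies in $H$ with $\{\psi\in\mathrm{Hom}(H,\T):\psi(u_n)\to 0\}$ already separating the points of $H$, then $\uu$ is a $TB$-sequence in $G$. Indeed, every character of $G$ vanishing on $H$ automatically kills $\uu$, so $H^\perp:=\{\chi:\chi|_H=0\}\subseteq\Gamma_\uu$; and since $\T$ is divisible, hence injective, every $\psi$ separating points inside $H$ extends to some $\tilde\psi\in\mathrm{Hom}(G,\T)$, which still satisfies $\tilde\psi(u_n)=\psi(u_n)\to 0$, so $\tilde\psi\in\Gamma_\uu$. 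The subgroup generated by $H^\perp$ together with these extensions lies in $\Gamma_\uu$ (as $\phi(u_n)=0$ for $\phi\in H^\perp$), and it separates the points of $G$: $H^\perp$ separates any $g\notin H$ from $0$ because $\bigcap_{\chi\in H^\perp}\ker\chi=H$, while the chosen extensions separate the nonzero elements of $H$. This lemma is the conceptual heart of the argument, and the step where care is essential, since a naive restriction to $H$ yields separation only on $H$ and not on all of $G$.

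Next I would invoke the classical structural fact that every infinite abelian group contains a subgroup isomorphic to one of $\Z$, $\Z(p^\infty)$ for some prime $p$, or an infinite direct sum $\bigoplus_{n\in\N}\langle a_n\rangle$ of nonzero cyclic groups, and exhibit an internal $TB$-sequence in each model. For $H=\Z$ take $u_n=2^n$: a short computation gives $\{\alpha\in\T:2^n\alpha\to 0\}=\Z(2^\infty)$, dense in $\mathrm{Hom}(\Z,\T)=\T$ and hence separating the points of $\Z$. For $H=\Z(p^\infty)$ with canonical generators $c_n$ (so $o(c_n)=p^{n+1}$, $pc_{n+1}=c_n$) take $u_n=c_n$: identifying $\mathrm{Hom}(\Z(p^\infty),\T)$ with the $p$-adic integers, the characters with $\chi(c_n)\to 0$ are exactly those indexed by $\Z$, a dense and hence separating subgroup. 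For $H=\bigoplus_n\langle a_n\rangle$ take $u_n=a_n$: here the characters vanishing on a tail $\bigoplus_{n\ge N}\langle a_n\rangle$ already kill $\uu$ and separate the points of $H$, by the same $\perp$-argument as in the lemma. Each of these sequences is injective, and by the reduction lemma each is a $TB$-sequence in $G$, which proves the theorem.

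I expect the only genuine obstacle to be the point-separation in the reduction lemma, namely handling the elements of $G$ lying outside the chosen subgroup; the computations $\{\alpha:2^n\alpha\to 0\}=\Z(2^\infty)$ and the corresponding description of the separating subgroup of $p$-adic integers are routine, and the trichotomy for infinite abelian groups is standard and may simply be cited.
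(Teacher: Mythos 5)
Your proof is correct. A preliminary remark: the paper does not actually prove Theorem~\ref{ExTs} — it is quoted from~\cite[Example~1.13(d)]{DGT} — so there is no internal proof to compare against; your argument is essentially a reconstruction of the proof behind that citation, and its two pillars occur verbatim elsewhere in this paper. Your equivalence ``$\uu$ is a $TB$-sequence iff $\Gamma_\uu=s_\uu(\mathrm{Hom}(G,\T))$ separates the points of $G$'' is exactly Theorem~\ref{tb}(b) in the case $\I=\FF in$, and your reduction lemma is Lemma~\ref{gen}: the paper's proof takes the subgroup $R^{-1}(D)\le\mathrm{Hom}(G,\T)$, where $D=(H,\sigma)^\wedge$ and $R$ is the restriction map (surjective by divisibility, i.e.\ injectivity, of $\T$), and $R^{-1}(D)$ is precisely the subgroup generated by $H^\perp$ together with your extensions $\tilde\psi$ — the same two-case separation argument (elements of $G\setminus H$ via $H^\perp$, nonzero elements of $H$ via extensions) appears there too. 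The remaining content — the trichotomy $\Z$, $\Z(p^\infty)$, $\bigoplus_n\langle a_n\rangle$ plus a separating computation in each model — is what \cite{DGT} does, and your computations check out: $s_{(2^n)}(\T)=\Z(2^\infty)$ and $s_{(c_n)}(\J_p)=\Z$ (dense in $\J_p$) both follow from the overlapping-digit-pair argument you allude to, and in the direct-sum case the tail annihilators $\bigl(\bigoplus_{n\ge N}\langle a_n\rangle\bigr)^\perp$ suffice. Two small points to tighten: the trichotomy is standard but not completely trivial in the torsion case (for an infinite $p$-group with no infinite direct sum of cyclic subgroups one needs basic subgroups, or the fact that a finite pure bounded subgroup splits off, to extract $\Z(p^\infty)$), so it should carry an explicit citation rather than ``may simply be cited''; and you should state that your three model sequences are one-to-one with nonzero terms, which is what delivers the required non-triviality of the $TB$-sequence in $G$.
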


It was proved earlier that every infinite group admits a non-trivial $T$-sequence~\cite[Theorem~2.1.7]{PZ}.

\subsection{$T^\I$- and $TB^\I$-sequences}\hfill

Recall that an ideal $\I$ on a set $X$ is a subset of the powerset of $X$ which is closed under taking subsets and finite unions. We are mainly interested in ideals on $\N_0$. Clearly, for every $A\subseteq \N_0$, the powerset of $A$ is an ideal on $\N_0$. Another prominent example is the ideal $\FF in$ on $\N_0$ of finite subsets of $\N_0$.

Next we recall the notion of ideal convergence:

\begin{deff}\cite{Cartan}
Let ${\cal I}$ be an ideal on $\N_0$, let $(X,\tau)$ be a topological space and $(x_n)$ a sequence in $X$.
We say that $(x_n)$ \emph{$\I$-converges} to $x$, denoted by $x_n\stackrel{{\cal I}}{\longrightarrow}x$, whenever $\{n\in \N_0:\ x_n\notin U \} \in  \cal I$ for every neighborhood $U$ of $x $ in $(X,\tau)$.
\end{deff}

Clearly, $\mathcal F in$-convergence is the usual convergence, so if $\mathcal F in\subseteq\I$, then $x_n\to x$ implies $x_n\stackrel{{\cal I}}{\longrightarrow}x$.

By analogy with Definition~\ref{Tdef}, we introduce the following extended notion.

\begin{deff}
Let ${\cal I}$ be an  
ideal on $\N_0$ and let $\uu$ be a sequence in an abelian group $G$.
If there is a Hausdorff (resp., precompact) group topology $\tau$ on $G$ such that $u_n\stackrel{{\cal I}}{\longrightarrow}0$ in $(G,\tau)$, we call $\uu$ a  {\bf  $T^\I$-sequence} (resp., {\bf  $TB^\I$-sequence}).
\end{deff}

Of course, every $TB^\I$-sequence is a $T^\I$-sequence, but the converse is not true as we mentioned above for $\I=\mathcal Fin$.
Nevertheless, one can try to find ideals as in the following problem, where one should start with the case $G=\Z$.

\begin{problem}
Let $G$ be an abelian group.
Characterize those ideals ${\cal I}$ on $\N_0$ for which every $T^\I$-sequence in $G$ is also a $TB^\I$-sequence.
\end{problem}

To avoid trivialities (e.g., see Example~\ref{tr} and~\cite{DRAH}), we usually impose the ideal $\I$ to be {\bf free} (i.e., $\mathcal F in\subseteq\I$) and {\bf proper} (i.e. $\I\not=\mathcal{P}(\N_0)$).

\smallskip
Clearly, $T^{\mathcal Fin}$-sequences are exactly $T$-sequences, and $TB^{\mathcal Fin}$-sequences are exactly $TB$-sequences.
Moreover, in case $\I$ is a free ideal, every $T$-sequence is a $T^\I$-sequence, and every $TB$-sequence is a $TB^\I$-sequence.


If ${\bf u}$ is a $T^\I$-sequence (resp., $TB^\I$-sequence) in an abelian group $G$, then there exists a finest Hausdorff group topology  $T^{{\cal I}}_{{\bf u}}(G)$ (resp., a finest precompact group topology $T^{\I}_{\mathbf p\uu}(G)$) on $G$ in which $\uu$ $\I$-converges to $0$. (Take the supremum of all Hausdorff (resp., precompact) group topologies with this property and observe that it has again the property $u_n\stackrel{{\cal I}}{\longrightarrow}0$.)

 When there is no possibility of confusion, we simply write $T^{{\cal I}}_{{\bf u}}$ in place of $T^{{\cal I}}_{{\bf u}}(G)$, respectively
 $T^{{\cal I}}_{\mathbf p{\bf u}}$ in place of $T^{{\cal I}}_{\mathbf p{\bf u}}(G)$. Clearly, $T^{{\cal I}}_{\mathbf p{\bf u}}\subseteq T^{{\cal I}}_{{\bf u}}$ for any $TB^\I$-sequence $\uu$.
The correspondence $\uu \mapsto T^\I_\uu$ is monotone decreasing, i.e., if ${\bf u}$ is a $T^\I$-sequence and 
${\bf v}\sqsubseteq {\bf u}$, then also ${\bf v}$ is a $T^\I$-sequence and $T_{{\bf v}}^\I\supseteq T_{{\bf u}}^\I$. The same holds for $\uu\mapsto T^\I_{\mathbf p\uu}$.

\smallskip
As consequence of the results in the case $\I=\mathcal Fin$ (more specifically, Theorem \ref{ExTs}), we immediately get that every infinite abelian group has a $TB^\I$-sequence for every free ideal $\I$ on $\N_0$. Moreover, we establish the existence of a metrizable group topology that makes a $T^\I$-sequence converge to $0$ (see \S\ref{first}).
Furthermore, in Section~\ref{explicitsec} we find an explicit description of a neighborhood base at $0$ in $T_\uu^\I$ (see Proposition~\ref{explicit}), extending its counterpart for the Protasov-Zelenyuk topology established in~\cite{PZ} (see also~\cite{Schar} and~\cite[\S 5.3.3]{ITG}).

In Section~\ref{sub}, some information concerning the topology $(G,T^\I_{\uu})$ is provided.  In case there exists $I\in \I$ such that $(u_n)_{n\in\N\setminus I}$ is a $T$-sequence, then the whole sequence $\I$-converges to $0$ and so it is a $T^\I$-sequence. As a consequence, the topology $T^\I_{\uu}$ is finer than $T_{(u_n)_{n\in\N_0\setminus I}}$ (see Proposition~\ref{221}). This is a very helpful observation in the case of $P$-ideals.

As a consequence, many compact subsets of $(G,T^\I_{\uu})$ can be found. Although, see Question~\ref{Ques1}, we do not know if all compact subsets can be  found in this way.

\smallskip
In Section~\ref{susec} we study $\I$-characterized subgroups and their   relation with $T^\I$- and $TB^\I$-sequences. Following~\cite[Definition 7.25]{DRAH}, for an ideal $\I$ on $\N_0$, a topological abelian group $G$, and a sequence $\uu$ in $G^\wedge$, let ${s}^{{\cal I}}_{{\bf u}}(G)= \{x\in G: u_n(x) \stackrel{{\cal I}}{\longrightarrow} 0\}.$
A subgroup $H$ of $G$ is an {\bf ${\cal I}$-characterized subgroup} of $G$ if $H=s_\uu^\I(G)$ for some sequence $\uu$ in $G^\wedge$.

Clearly, $s_{{\bf u}}(G)={s}^{{\cal F}in}_{{\bf u}}(G)= \{x\in G: u_n(x) \to 0\}$ is the subgroup of $G$ characterized by $\uu$ (see~\cite{BDS,MAP} and the survey~\cite{DDG}).
The first instance of novelty in the theory of characterized subgroups was given by the ideal $\I_d$ of subsets of $\N$ with zero natural density (see Example~\ref{Id}). Indeed, the subgroups ${s}^{{\cal I}_d}_{{\bf u}}(\T)$ of $\T$ were very briefly mentioned by Borel~\cite{BO} in completely different terms; unaware of this, Bose, Das and the second named author re-introduced them recently in~\cite{DDB} under the name {\bf statistically characterized}. The latter paper was the first of a long list of papers on statistically characterized subgroups of $\T$ and more extended notions, among which the most general one is that of $\I$-characterized subgroups for an ideal $\I$ on $\N_0$ (see the survey~\cite{DRAH}).

It turns out that algebraically, the $\I$-characterized subgroup $s_{\uu}^\I(G)$ can be canonically identified with  the character group of $(G^\wedge, T^\I_{\uu})$  (see Theorem~\ref{cg}).
  Moreover, we extend \eqref{wedgeq} in Corollary~\ref{1.1I} to this setting.

It was proved by Comfort, Trigos and Ta Sun Wu~\cite[Proof of Lemma~3.10]{CJW} that when $G$ is a compact abelian group, ${s}_{{\bf u}}(G)$ is always an $F_{\sigma\delta}$-set of $G$.  Here (see Theorem~\ref{sd}) we prove analogously that $s_{\bf u}^\I(G)$ is an $F_{\sigma\delta}$-set of $G$ for any compact abelian group $G$ and any analytic $P$-ideal $\I$ on $\N_0$.
This was proved in~\cite{DDB} in the special case $\I=\I_d$ and $G=\T$, that is, for statistically characterized subgroups of $\T$.
Since an $\I$-characterized subgroup $H$ of a compact abelian group $G$ is a Borel set by Theorem~\ref{sd}, its Haar measure $\mu(H)$
is well defined; it is $0$ if $H$ is of infinite index in $G$, otherwise $\mu(H) = 1/[G:H]$. Hence, $\mu(H) =0$ when $G$ is connected   and $H$ is proper.

\subsection*{Notation and terminology}\hfill

We denote by ${\N}$ the sets of naturals $\{1,2,3, \dots\}$ and put $\N_0 = \{0\} \cup \N$. The set of primes is denoted by $\P$.

We consider a sequence $\uu$ in an abelian group $G$ also as a function $\N_0\to G$.
In this vein, for an infinite subset $J\subseteq \N_0$ we shall simply write only $\uu\restriction_J$ for the (sub)sequence $(u_n)_{n\in J}$. We write ${\bf v}\sqsubseteq_J{\bf u}$ if $(v_k)$ is the subsequence $(u_n)_{n\in J}$ and ${\bf v}\sqsubseteq {\bf u}$ to just say that ${\bf v}$ is a subsequence of ${\bf u}$.

In a topological space $(X,\tau)$, denote by $\VV_\tau(x)$ the filter of neighborhoods of $x$. If $X$ is a topological group with neutral element $e$, then we briefly write $\VV_\tau$ in place of $\VV_\tau(e)$.

For a topological abelian group $G$ we denote by $G^\wedge$ its Pontryagin dual.

  We denote $\T=\R/\Z$ endowed with the topology induced by the topology of $\R$. Let $\varphi\colon\R\to \T$ be the canonical projection. For every $k\in\N$, let $\T_k=\varphi([-\frac{1}{4k},\frac{1}{4k}])$.

\section{Background on $T$- and $TB$-sequences}


It is of course of great importance to have  a method to check whether a given sequence is a $T$-sequence or not.
The following criterion  (Theorem~\ref{characterization_T}) is shown in~\cite{PZ}. 

\begin{notation}\label{um}
For a sequence $\uu$  in $G$ and $m\in\N_0$, put $$\uu_m=\{0\}\cup\{\pm u_k:k\geq m\}.$$
For $k\in\N$, an abelian group $G$ and $B\subseteq G$, put
$$k\odot B = \underbrace{   (\{0\}\cup\pm  B)+     (\{0\}\cup \pm B) + \ldots +    (\{0\}\cup \pm B)}_{k\ \mbox{\tiny summands}}$$
\end{notation}

\begin{theorem}\cite[2.1.4]{PZ}
\label{characterization_T} 
A sequence $\uu$ in an abelian group $G$ is a $T$-sequence if and only if for every $g\in G\setminus \{0\}$ and every $k\in\N$ there exists $m\in\N$ such that $g\notin k\odot \uu_m$, equivalently, $\bigcap_{m\in\N}k\odot\uu_m=\{0\}$ for every $k\in\N$.
\end{theorem}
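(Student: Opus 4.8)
My plan is to prove the two displayed forms of the condition equivalent, and then to establish the two implications of the characterization, the genuinely hard part being the construction of a Hausdorff topology.

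\emph{Equivalence of the two forms.} For a fixed $k$ the sets $k\odot\uu_m$ decrease with $m$ (since $\uu_{m+1}\subseteq\uu_m$) and each contains $0$. Hence $\bigcap_{m\in\N}k\odot\uu_m=\{0\}$ says precisely that no $g\neq0$ lies in all of them, i.e.\ for each such $g$ there is an $m$ with $g\notin k\odot\uu_m$; quantifying over $k$ yields the stated equivalence.

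\emph{Necessity.} Suppose $\uu$ is a $T$-sequence, witnessed by a Hausdorff group topology $\tau$ with $u_n\to0$. Fix $g\neq0$ and $k\in\N$. As $\tau$ is Hausdorff there is $U\in\VV_\tau$ with $g\notin U$, and by continuity of the $k$-fold sum at $0$ there is a symmetric $V\in\VV_\tau$ with the $k$-fold sum $V+\dots+V\subseteq U$. Since $u_n\to0$, choose $m$ with $u_n\in V$ for all $n\ge m$; as $V$ is symmetric and $0\in V$ this gives $\uu_m\subseteq V$, whence $k\odot\uu_m\subseteq U$ and so $g\notin k\odot\uu_m$, as required.

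\emph{Sufficiency: the construction.} Assume now $\bigcap_m k\odot\uu_m=\{0\}$ for every $k$. For a nondecreasing sequence $\mathbf m=(m_i)_{i\in\N}$ in $\N$ put $A_{\mathbf m}=\bigcup_{r\ge0}(\uu_{m_1}+\dots+\uu_{m_r})$, that is, all finite sums whose $i$-th summand is taken from $\uu_{m_i}$. I would verify that $\{A_{\mathbf m}\}$ is a base of neighborhoods of $0$ for a group topology $\tau$ on $G$: each $A_{\mathbf m}$ is symmetric and contains $0$; the family is downward directed, since for $\mathbf p=(\max(m_i,n_i))_i$ one has $A_{\mathbf p}\subseteq A_{\mathbf m}\cap A_{\mathbf n}$; and the decisive halving property $A_{\mathbf m'}+A_{\mathbf m'}\subseteq A_{\mathbf m}$ holds for $\mathbf m'=(m_{2i})_i$, obtained by interleaving the two summands into the odd and even slots and using $m_{2i-1}\le m_{2i}$ to place each term in a legitimate (hence larger) set $\uu_{m_{2i-1}}$. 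Convergence $u_n\to0$ is immediate, since $u_n\in\uu_{m_1}\subseteq A_{\mathbf m}$ as soon as $n\ge m_1$.

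\emph{Hausdorffness: the main obstacle.} It remains to show $\bigcap_{\mathbf m}A_{\mathbf m}=\{0\}$, and this is exactly where the hypothesis is indispensable and where the real work lies. The difficulty is that a representation $g=\sum_j a_{i_j}\in A_{\mathbf m}$ (nonzero summands at slots $i_1<\dots<i_R$) may use arbitrarily many terms: all of them lie in the single large set $\uu_{m_1}$, so $g\in R\odot\uu_{m_1}$, but the length $R$ is not controlled, so one cannot fix a single $k$ in advance to invoke $\bigcap_m k\odot\uu_m=\{0\}$. My plan is, given $g\neq0$, to choose $\mathbf m$ recursively so as to forbid representations of \emph{every} length: peeling off the highest-index summand $a_{i_R}\in\uu_{m_{i_R}}\subseteq\uu_{m_R}$ (note $i_R\ge R$) shows $g\in(R-1)\odot\uu_{m_1}+\uu_{m_R}$, and one selects the parameters $m_r$ large, guided by the exclusions $g\notin r\odot\uu_m$ furnished by the hypothesis, so as to rule this out. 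The delicate point — the technical heart of the whole argument — is to choose the leading parameter $m_1$ and the later $m_r$ coherently, so that these exclusions operate for all $r$ simultaneously despite the unbounded length $R$. Once $g\notin A_{\mathbf m}$ is secured for every $g\neq0$, the topology $\tau$ is Hausdorff and witnesses that $\uu$ is a $T$-sequence.
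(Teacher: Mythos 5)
Your preliminaries are correct and match the standard route: the equivalence of the two displayed forms (monotonicity of $m\mapsto k\odot\uu_m$), the necessity direction via a symmetric $V$ with $k$-fold sum inside $U$, and the construction of the group topology from the sets $A_{\mathbf m}=\bigcup_{r}(\uu_{m_1}+\dots+\uu_{m_r})$, including the interleaving trick for $A_{\mathbf m'}+A_{\mathbf m'}\subseteq A_{\mathbf m}$ with $\mathbf m'=(m_{2i})_i$. This is exactly the computation the paper records (in the more general ideal setting) in Proposition~\ref{explicit}, whose $\I=\FF in$ case recovers the Protasov--Zelenyuk base $\uu_{(m_n)}$. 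Note, however, that the paper does not prove the statement at hand at all: it is quoted from~\cite[2.1.4]{PZ}, and Proposition~\ref{explicit} sidesteps Hausdorffness by \emph{assuming} $\uu$ is a $T^\I$-sequence. So the only honest comparison is with the proof in~\cite{PZ}, and there your proposal stops short.

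The gap is precisely the step you flag as ``the technical heart'': you announce a recursive choice of $(m_r)$ forcing $g\notin A_{\mathbf m}$, but never show the recursion can be carried out, and as stated it cannot be run naively. Maintaining only the invariant $g\notin\uu_{m_1}+\dots+\uu_{m_r}$ is useless against sums of unbounded length $R$, since, as you observe, all one gets from crude grouping is $g\in R\odot\uu_{m_1}$ with $R$ uncontrolled. The known fix is a \emph{strengthened} inductive hypothesis of the shape $g\notin\uu_{m_1}+\dots+\uu_{m_{r-1}}+r\odot\uu_{m_r}$ for every $r$: then an arbitrary sum of length $R$ is killed by choosing $r=\lceil(R+1)/2\rceil$, grouping its tail of $R-r+1\le r$ summands (all lying in $\uu_{m_r}$) into the block $r\odot\uu_{m_r}$. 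Even with this invariant, the inductive step --- producing $m_{r+1}$ with $g\notin\uu_{m_1}+\dots+\uu_{m_r}+(r+1)\odot\uu_{m_{r+1}}$ --- is not a direct application of the hypothesis $\bigcap_m k\odot\uu_m=\{0\}$, because the ``already built'' set $\uu_{m_1}+\dots+\uu_{m_r}$ is infinite, so one must exclude representations $g=s+t$ with $s$ ranging over infinitely many values, not a single fixed element; handling this is the actual content of the argument in~\cite{PZ}. Your phrase ``one selects the parameters $m_r$ large, guided by the exclusions $g\notin r\odot\uu_m$'' names the goal but supplies neither the correct invariant nor the existence proof for the selection, so the sufficiency direction remains unproved.
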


The property of being a $T$-sequence (resp., $TB$-sequence) is intrinsic for the subgroup $H$ generated by the sequence $\uu$:

\begin{lemma}\label{gen}
Let $H$ be an abelian group.
If $\uu$ is a $T$-sequence (resp., $TB$-sequence) in $H$, then it is also a $T$-sequence (resp., $TB$-sequence)
in any abelian group $G$ containing $H$.
\end{lemma}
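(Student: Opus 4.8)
The plan is to treat the two assertions separately, since the $T$-sequence case is essentially formal while the $TB$-sequence case carries the actual content.

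For the $T$-sequence assertion I would invoke the intrinsic criterion of Theorem~\ref{characterization_T}. The point is that the sets $\uu_m=\{0\}\cup\{\pm u_k:k\ge m\}$ and $k\odot\uu_m$ are built solely from the terms $u_k$, all of which lie in $H$; consequently $k\odot\uu_m\subseteq H$ and the intersection $\bigcap_{m\in\N}k\odot\uu_m$ is literally the same subset of $H$ whether the ambient group is $H$ or $G$. Since $\uu$ is a $T$-sequence in $H$, Theorem~\ref{characterization_T} gives $\bigcap_{m}k\odot\uu_m=\{0\}$ for every $k\in\N$; reading the same equalities inside $G$ and applying Theorem~\ref{characterization_T} in $G$ shows that $\uu$ is a $T$-sequence in $G$. (Alternatively, one can extend a witnessing Hausdorff topology $\tau$ on $H$ to $G$ by declaring $\VV_\tau$ a base of neighborhoods of $0$; abelianness makes this a group topology, $H$ becomes open, the topology stays Hausdorff because $\bigcap\VV_\tau=\{0\}$ already inside $H$, and $u_n\to 0$ is inherited.)

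For the $TB$-sequence assertion I would argue on the dual side. Precompact Hausdorff group topologies on an abelian group correspond to point-separating subgroups of its character group, and for such a topology $\sigma(G,K)$ one has $u_n\to 0$ exactly when $\chi(u_n)\to 0$ in $\T$ for every $\chi\in K$. So set $D_G=\{\chi\in\mathrm{Hom}(G,\T):\chi(u_n)\to 0\}$ and, writing $r\colon\mathrm{Hom}(G,\T)\to\mathrm{Hom}(H,\T)$ for the restriction map, observe that since all $u_n$ lie in $H$ the condition $\chi(u_n)\to 0$ depends only on $\chi|_H$; hence $D_G=r^{-1}(D_H)$, where $D_H=\{\psi\in\mathrm{Hom}(H,\T):\psi(u_n)\to 0\}$. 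The hypothesis that $\uu$ is a $TB$-sequence in $H$ means precisely that $D_H$ separates the points of $H$, and the task reduces to showing that $D_G$ separates the points of $G$; then $\sigma(G,D_G)$ is a precompact Hausdorff topology on $G$ making $\uu$ null.

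The main obstacle is exactly this separation step, and the crucial tool is the divisibility (injectivity) of $\T$, which makes $r$ surjective. Given $g\in G\setminus\{0\}$ I would distinguish two cases. If $g\notin H$, then since the characters vanishing on $H$ are the pullbacks of $\mathrm{Hom}(G/H,\T)$ and the latter separates the points of $G/H$, some such $\chi$ has $\chi(g)\neq 0$; as $\chi|_H=0\in D_H$ we get $\chi\in D_G$. If instead $g\in H\setminus\{0\}$, I choose $\psi\in D_H$ with $\psi(g)\neq 0$ and extend it to $\chi\in\mathrm{Hom}(G,\T)$ using injectivity of $\T$; then $\chi|_H=\psi\in D_H$ gives $\chi\in D_G$ with $\chi(g)=\psi(g)\neq 0$. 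In both cases $g$ is separated from $0$ by $D_G$, so $D_G$ separates points and $\uu$ is a $TB$-sequence in $G$. I expect the only delicate points to be the bookkeeping that $\chi(u_n)\to 0$ truly depends only on $\chi|_H$ (immediate, as $u_n\in H$) and the correct invocation of injectivity of $\T$, both to extend a separating character of $H$ and to split off the quotient $G/H$.
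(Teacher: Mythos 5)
Your proposal is correct and takes essentially the same route as the paper: for the $TB$-case the paper likewise pulls back a dual group along the restriction homomorphism $\mathrm{Hom}(G,\T)\to\mathrm{Hom}(H,\T)$ (surjective by divisibility of $\T$) and separates points exactly as you do, splitting into $g\in H\setminus\{0\}$ (extend a separating character of $H$) and $g\notin H$ (use the annihilator $H^\bot$, i.e.\ characters factoring through $G/H$). The only cosmetic differences are that you work with the maximal group $D_H=s_\uu(\mathrm{Hom}(H,\T))$ where the paper pulls back an arbitrary witnessing dual $D=(H,\sigma)^\wedge$, and that for the $T$-case the paper simply declares $H$ open in $G$, which is precisely your parenthetical alternative (your primary argument via Theorem~\ref{characterization_T} is an equally valid shortcut).
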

\begin{proof}
Let $G$ be an abelian group containing $H$. For $T$-sequences the assertion is trivial, just declare the subgroup $H$ to be open in $G$.

In case $\uu$ is a $TB$-sequence, consider the surjective restriction homomorphism $R\colon {\rm Hom}(G,\T)\to {\rm Hom}(H,\T)$ such that $R(\chi)=\chi\restriction_H$ for every $\chi\in\mathrm{Hom}(G,\T)$. Let $\sigma$ 
be a precompact group topology on $H$ such that $\uu$ converges to $0$ in $\sigma$, and let  $(H,\sigma)^\wedge= D\le {\rm Hom}(H,\T)$. So $\sigma=\sigma(H,D)$. Then $\widetilde\sigma=\sigma(G,R^{-1}(D))$ is a totally bounded group topology on $G$ inducing $\sigma$ on $H$. Obviously, every element $h\in H\setminus \{0\}$ can be separated from $0$ using characters in $(G,\widetilde\sigma)^{\wedge}$; since $H^\bot\le R^{-1}(D)$, also every element in $G\setminus H$ can be separated from $0$. Hence, $\widetilde\sigma$ is Hausdorff, so precompact, and $\uu$ converges to $0$ in $(G,\widetilde \sigma)$.
\end{proof}

In view of this result, in the sequel we shall assume that $\uu$ generates $G$.

\begin{remark}\label{hemicpt}\cite[Lemma 2]{Schar}\cite[4.1.5]{PZ}
If $\uu$ is a $T$-sequence in an abelian group $G$ such that $\uu$ generates $G$, then $(G,T_{\uu})$ is a hemicompact $k$-space, more precisely, $G=\bigcup_{m\in\N}m\odot \uu$ and $A\subseteq G$ is closed if and only if $A\cap (m\odot \uu)$ is compact for every $m\in\N$.
Moreover, if $C\subseteq G$ is compact, then $C\subseteq m\odot\uu$ for a suitable $m\in\N$.
\end{remark}

From this remark it follows easily that  the topology $T_{\uu}$ is not metrizable if $\uu$ is a one-to-one sequence. Anyway, there exists a (strictly) coarser metrizable group topology in  which $\uu$ converges to $0$: 


\begin{proposition}\cite{ClassIT}\label{Met}
Let $G$ be an abelian group.
\begin{itemize}
\item[(a)] If $\mathcal T$ is a group topology on $G$ such that $(G,{\cal T})$ has an open countable subgroup, then there exists a metrizable group topology $\tau$ on $G$ coarser than ${\cal T}$.
\item[(b)] Consequently, if $\uu$ is a $T$-sequence in $G$, then there is a metrizable group topology $\tau$ (strictly coarser than $T_{\uu}$) on $G$ such that $u_n{\longrightarrow}0$ in $(G,\tau)$.
\end{itemize}
\end{proposition}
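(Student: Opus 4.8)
The plan is to establish (a) by an explicit Birkhoff--Kakutani style construction, and then to read off (b) as an immediate consequence. First I would note that (a) can only hold when $\mathcal{T}$ is Hausdorff, since a topology coarser than a non-Hausdorff one cannot separate the points that $\mathcal{T}$ itself fails to separate; this is harmless, as $\mathcal{T}$ is Hausdorff in the intended application. Let $H$ be the prescribed open countable subgroup. I may assume $H$ is infinite, for otherwise a finite open subgroup of a Hausdorff group forces $\{0\}$ to be open, making $\mathcal{T}$ discrete and hence already metrizable. The core idea is to build a decreasing sequence $U_0\supseteq U_1\supseteq\cdots$ of $\mathcal{T}$-open symmetric neighborhoods of $0$, \emph{all contained in $H$}, with $U_{n+1}+U_{n+1}\subseteq U_n$ and $\bigcap_n U_n=\{0\}$. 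Such a chain is a neighborhood base at $0$ for a group topology $\tau$ on $G$ (the basic open sets being the translates of the $U_n$); since each $U_n$ is $\mathcal{T}$-open, every $\tau$-open set is $\mathcal{T}$-open, so $\tau\subseteq\mathcal{T}$, and $\tau$ is first countable and, because $\bigcap_n U_n=\{0\}$, Hausdorff, hence metrizable by the Birkhoff--Kakutani theorem.

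For the construction I would set $U_0=H$ and enumerate $H\setminus\{0\}=\{g_1,g_2,\dots\}$. Given $U_{n-1}$, continuity of addition at $(0,0)$ provides an open symmetric $V\ni 0$ with $V+V\subseteq U_{n-1}$ and $V\subseteq U_{n-1}$, while Hausdorffness provides an open $W\ni 0$ with $g_n\notin W$; I then put $U_n=V\cap W\cap(-W)$. This $U_n$ is $\mathcal{T}$-open and symmetric, with $0\in U_n\subseteq U_{n-1}\subseteq H$, $U_n+U_n\subseteq V+V\subseteq U_{n-1}$, and $g_n\notin U_n$.

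The step I expect to be the crux is the verification $\bigcap_n U_n=\{0\}$, and this is exactly where the countability of $H$ is used. Each nonzero $g_n\in H$ satisfies $g_n\notin U_m$ for all $m\ge n$, so no nonzero element of $H$ survives in the intersection, and every element of $G\setminus H$ is already excluded by $U_0=H$; hence $\bigcap_n U_n=\{0\}$. The point to stress is that a single sequence of neighborhoods can separate only countably many points from $0$ one at a time, so it is essential to confine the entire base to the countable subgroup $H$: the (possibly uncountably many) points outside $H$ are disposed of \emph{for free} by the single requirement $U_0\subseteq H$. This completes (a).

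Finally (b) follows at once. Since we assume $\uu$ generates $G$, the group $G$ is countable, so $H=G$ is an open countable subgroup of $(G,T_\uu)$, and $T_\uu$ is Hausdorff; applying (a) with $\mathcal{T}=T_\uu$ yields a metrizable group topology $\tau\subseteq T_\uu$. As $u_n\to 0$ in the finer topology $T_\uu$ and $\tau$ is coarser, $u_n\to 0$ in $\tau$ as well. The strictness $\tau\subsetneq T_\uu$ holds whenever $\uu$ is one-to-one, since then $T_\uu$ is not metrizable (as observed following Remark~\ref{hemicpt}) whereas $\tau$ is.
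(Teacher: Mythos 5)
Your argument is correct, but it is worth noting that the paper itself contains almost no proof of this proposition: part (a) is cited to the preprint \cite{ClassIT}, and the only proof content given is the one-line remark that (b) follows from (a) because the countable subgroup $\langle u_n : n\in\N_0\rangle$ is open in $(G,T_\uu)$. Your self-contained proof of (a) — the decreasing chain $U_0=H\supseteq U_1\supseteq\cdots$ of symmetric $\mathcal{T}$-open neighborhoods with $U_{n+1}+U_{n+1}\subseteq U_n$, killing the countably many nonzero elements of $H$ one at a time while the single condition $U_0=H$ excludes all of $G\setminus H$, followed by Birkhoff--Kakutani — is the standard construction and is sound; your observation that Hausdorffness of $\mathcal{T}$ is implicitly required is also correct, since any topology coarser than a non-Hausdorff group topology fails to separate the closure of $\{0\}$. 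Where you genuinely diverge is in (b): you invoke the paper's standing convention that $\uu$ generates $G$, so that $H=G$ is countable, whereas the paper applies (a) to the open countable subgroup $\langle u_n : n\in\N_0\rangle$ inside an \emph{arbitrary} abelian group $G$, which needs no such convention; the openness of $\langle u_n : n\in\N_0\rangle$ in $T_\uu$ is precisely the nontrivial input there (it follows from the maximality of $T_\uu$: refining $T_\uu$ by declaring this subgroup open yields a Hausdorff group topology in which $\uu\to 0$, which must then coincide with $T_\uu$), and your route sidesteps it at the cost of generality. Two cosmetic points: the translates $x+U_n$ form a neighborhood base at each point rather than a base of $\tau$-open sets, since the $U_n$ need not be $\tau$-open (your comparison $\tau\subseteq\mathcal{T}$ is unaffected, as every $\tau$-open set is a union of the $\mathcal{T}$-open sets $x+U_n$); and the parenthetical strictness in the statement indeed only holds under the non-metrizability of $T_\uu$ for one-to-one $\uu$ (the observation after Remark~\ref{hemicpt}), exactly as you hedge.
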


Item (b) follows from (a) since $\langle u_n: n\in\N_0\rangle$ is open in $T_{{\bf u}}$.

An analog of the above proposition does not hold for $TB$-sequences due to the following restriction:

\begin{proposition}\cite{ClassIT}\label{Metb}
Let $G$ be an abelian group. Then:
\begin{itemize}
\item[(a)] $G$ admits a precompact metrizable group topology if and only if $|G|\le \mathfrak{c}$.
\item[(b)] if $|G|\le  \mathfrak{c}$ and $\uu$ is a $TB$-sequence in $G$, 
then there is a precompact metrizable group topology $\tau$ on $G$ such that $u_n{\longrightarrow}0$ in $(G,\tau)$.
\end{itemize}
\end{proposition}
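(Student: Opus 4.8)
The plan is to identify precompact metrizable topologies on $G$ with \emph{countable} point-separating subgroups of $\mathrm{Hom}(G,\T)$, and then to convert the cardinal bound into an embedding into $\T^{\N}$. Recall that every precompact Hausdorff topology on $G$ has the form $\sigma(G,H)$ for a point-separating subgroup $H\le\mathrm{Hom}(G,\T)$, with $(G,\sigma(G,H))^\wedge=H$. First I would show that $\sigma(G,H)$ is metrizable if and only if $H$ is countable: if $H$ is countable then $g\mapsto(\chi(g))_{\chi\in H}$ embeds $G$ into the metrizable group $\T^{H}\cong\T^{\N}$; conversely, a metrizable precompact group has a compact metrizable completion $K$, whose discrete dual equals $H$, so $H$ is countable. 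Granting this, if $G$ carries a precompact metrizable topology then $G$ embeds into $\T^{\N}$, whence $|G|\le|\T^{\N}|=\mathfrak c$.

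For the converse in (a), suppose $|G|\le\mathfrak c$. I would pass to the divisible hull $D(G)$, which still has cardinality $\le\mathfrak c$, so in its decomposition $D(G)\cong\Q^{(r_0)}\oplus\bigoplus_{p}\Z(p^\infty)^{(r_p)}$ all ranks satisfy $r_0,r_p\le\mathfrak c$. Since $\T^{\N}$ is divisible with torsion-free rank $\mathfrak c$ and $p$-rank $\dim_{\F_p}(\Z/p)^{\N}=\mathfrak c$ for every prime $p$, the group $D(G)$ embeds into $\T^{\N}$, and hence so does $G$. The coordinate characters then form a countable point-separating subgroup $H$, and $\sigma(G,H)$ is the desired precompact metrizable topology.

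\textbf{Plan for (b).} Writing $D'=\{\chi\in\mathrm{Hom}(G,\T):\chi(u_n)\to0\}$, the target topology must be $\sigma(G,H)$ for a \emph{countable} point-separating $H\le D'$: indeed $\sigma(G,H)$ makes $\uu$ converge to $0$ exactly when $H\subseteq D'$, it is precompact whenever $H$ is point-separating, and metrizable exactly when $H$ is countable by the argument in~(a). Since $\uu$ is a $TB$-sequence, some precompact $\sigma(G,D)$ has $u_n\to0$, so $D\subseteq D'$ is point-separating and hence $D'$ is point-separating. The whole problem therefore reduces to producing a countable point-separating subgroup \emph{inside} $D'$.

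The main obstacle is precisely this last point: a point-separating subgroup of $\mathrm{Hom}(G,\T)$ need not contain any countable point-separating subgroup (for instance the finitely supported functionals on $\bigoplus_{\mathfrak c}\Z/2$ separate points, yet no countably many of them do), so one cannot argue by pure cardinality and must exploit the convergence. The observation that breaks the deadlock is that $H_0:=\langle\uu\rangle$ is a \emph{countable} subgroup and that every character vanishing on $H_0$ automatically lies in $D'$, i.e. $(H_0)^{\perp}=\mathrm{Hom}(G/H_0,\T)\subseteq D'$. I would then separate $G$ in two independent stages. Applying~(a) to the quotient $G/H_0$ (which has cardinality $\le\mathfrak c$) yields a countable point-separating family $\Psi\le\mathrm{Hom}(G/H_0,\T)$; lifting it along $G\to G/H_0$ gives a countable $\widetilde\Psi\subseteq(H_0)^{\perp}\subseteq D'$ whose common kernel is exactly $H_0$. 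Since $H_0$ is countable and $D'$ separates its points, I would pick for each nonzero $h\in H_0$ a character $\chi_h\in D'$ with $\chi_h(h)\neq0$, obtaining a countable $\Phi\subseteq D'$. Then $H:=\langle\widetilde\Psi\cup\Phi\rangle\le D'$ is countable, and its common kernel is $H_0\cap\bigcap_{\chi\in\Phi}\ker\chi=\{0\}$, so $H$ is point-separating. Consequently $\sigma(G,H)$ is a precompact metrizable group topology in which $\uu$ converges to $0$, as required. The only routine checks left are that lifting preserves membership in $D'$ and that the two kernel computations combine as claimed.
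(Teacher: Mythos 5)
Your proposal is correct, but note that the paper itself offers no internal proof to compare against: Proposition~\ref{Metb} is quoted from the preprint \cite{ClassIT}, so the comparison can only be with the standard arguments that result rests on. Your route is the natural one and is complete. For (a), the reduction via the Comfort--Ross correspondence (precompact Hausdorff topologies are exactly the $\sigma(G,H)$ for point-separating $H\le\mathrm{Hom}(G,\T)$, with $(G,\sigma(G,H))^\wedge=H$, as in \cite[Proposition~11.3.10]{ITG}) together with the equivalence ``$\sigma(G,H)$ metrizable $\Leftrightarrow$ $H$ countable'' is exactly right, and the converse direction via the divisible hull $D(G)\cong\Q^{(r_0)}\oplus\bigoplus_p\Z(p^\infty)^{(r_p)}$ embedding into $\T^{\N}$ (which has $r_0=r_p=\mathfrak{c}$ for all $p$) is the standard cardinal-rank argument. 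For (b), you correctly isolate the genuine difficulty --- that a point-separating subgroup of $\mathrm{Hom}(G,\T)$ need not contain a countable point-separating subgroup, as your $\bigoplus_{\mathfrak{c}}\Z/2$ example shows --- and your two-stage fix is sound: since $H_0=\langle\uu\rangle$ is countable, the annihilator $H_0^\perp\cong\mathrm{Hom}(G/H_0,\T)$ sits trivially inside $D'=s_\uu(\mathrm{Hom}(G,\T))$, so (a) applied to $G/H_0$ yields countably many characters in $D'$ with common kernel exactly $H_0$, and countably many further characters of $D'$ (which separates points because $\uu$ is a $TB$-sequence) kill the nonzero elements of $H_0$. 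The generated subgroup $H$ lies in $D'$ because $D'$ is a subgroup, and $\sigma(G,H)$ is then totally bounded, Hausdorff, metrizable, and makes $\uu$ a null sequence. It is worth observing that your argument needs no hypothesis that $\uu$ generates $G$, consistent with the statement; by contrast, in the ideal analogue Proposition~\ref{MetStat*}(b) the authors do assume $G=\langle\{u_n:n\in\N_0\}\rangle$, and your decomposition $G\to G/H_0$ explains precisely why that assumption can be avoided here: the sequence only constrains characters through their restriction to the countable subgroup $H_0$, and everything transverse to $H_0$ can be separated for free inside $H_0^\perp$.
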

 Recall that an abelian group $G$ is called {\bf almost torsion free} if for every prime $p$ the $p$-rank $r_p(G)$ is finite.
The following theorem is one of the main results of~\cite{ClassIT}:

\begin{theorem}~\cite{ClassIT} \label{classITth}
For an abelian group $G$ the following assertions are equivalent:
\begin{enumerate}
\item[(a)] $G$ is almost torsion free or of prime exponent;
\item[(b)] every one-to-one sequence in $G$ has a $T$-subsequence;
\item[(c)] every one-to-one sequence in $G$ has a $TB$-subsequence.
\end{enumerate}
\end{theorem}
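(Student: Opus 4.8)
The plan is to establish the cycle by proving the trivial implication (c)$\Rightarrow$(b), the structural implication (b)$\Rightarrow$(a) by contraposition, and the substantial implication (a)$\Rightarrow$(c). Since every precompact group topology is Hausdorff, every $TB$-sequence is a $T$-sequence; hence a $TB$-subsequence is in particular a $T$-subsequence and (c)$\Rightarrow$(b) holds at once.

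For (b)$\Rightarrow$(a) I would argue by contraposition and produce, for a group $G$ that is \emph{neither} almost torsion free \emph{nor} of prime exponent, a one-to-one sequence with \emph{no} $T$-subsequence. Failure of ``almost torsion free'' yields a prime $p$ with $r_p(G)=\infty$, i.e. an infinite $\F_p$-independent family $\{a_n:n\in\N_0\}\subseteq G[p]$ of elements of order $p$; failure of ``prime exponent'', applied to this same $p$, gives $pG\neq\{0\}$, so I may fix $b\in G$ with $pb\neq 0$. Put $u_n:=b+a_n$; this is one-to-one because the $a_n$ are distinct. The key point is that $p u_n=pb+pa_n=pb$ for \emph{every} $n$, so, writing $g:=pb\neq 0$, for every infinite $J\subseteq\N_0$ and every $m$ the element $g=p u_n$ (choose any $n\in J$ with $n\ge m$) lies in $p\odot(\uu\restriction_J)_m$. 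Thus $g\in\bigcap_m p\odot(\uu\restriction_J)_m$ with $g\neq 0$, and Theorem~\ref{characterization_T} (with $k=p$) shows that no subsequence $\uu\restriction_J$ is a $T$-sequence. Hence (b) fails, as required.

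For (a)$\Rightarrow$(c) I would, using Lemma~\ref{gen}, replace $G$ by the (countable) subgroup generated by the given one-to-one sequence and split into two cases. If $G$ has prime exponent $p$, then $G$ is an $\F_p$-vector space and a one-to-one sequence has an $\F_p$-independent subsequence $(v_k)$ (extract greedily; a finite-dimensional $\F_p$-space is finite, so independence cannot stall). On $H=\langle v_k\rangle=\bigoplus_k\F_p v_k$ the finitely supported coordinate characters $v_k\mapsto c_k/p\in\T$ separate the points of $H$, while each of them vanishes on $v_k$ for large $k$; hence the precompact topology $\sigma(H,D)$ attached to the subgroup $D\le\mathrm{Hom}(H,\T)$ they generate is Hausdorff and makes $(v_k)$ null, so $(v_k)$ is a $TB$-sequence in $H$ and, by Lemma~\ref{gen}, in $G$.

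The remaining case, $G$ almost torsion free, is the main obstacle. Here I would first extract a subsequence on which the behaviour is uniform: one-to-oneness forces the orders to be unbounded (the elements of bounded order form $G[N]$, which is finite when all $p$-ranks are finite), so after thinning, and decomposing along the torsion and torsion-free parts, I may assume the elements are either $\Z$-independent, or lie in a fixed torsion-free part of finite rank, or are $p$-elements of strictly increasing order inside a finite-rank $p$-component (necessarily involving a copy of $\Z(p^\infty)$, since finite-rank reduced $p$-groups are finite). In the independent case the finite-support-character argument above applies verbatim. In the finite-rank cases, however, one cannot separate points by ``coordinate'' characters, and a \emph{separating} family of characters $\chi$ with $\chi(v_k)\to 0$ must be built by hand: after passing to a fast-growing subsequence one constructs a single injective character (an element of $\widehat{G}$, e.g. of $\mathbb{Z}_p$ in the $\Z(p^\infty)$ case, or a suitable $\alpha\in\T$ in the torsion-free rank-one case) digit by digit via a Liouville-type diagonal argument, so that it is point-separating on its own while killing the tail of $(v_k)$. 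Carrying out this $p$-adic/Diophantine construction, and checking that it yields enough characters to separate all points of the finite-rank group simultaneously, is the technical heart of the proof; everything else reduces to it through Lemma~\ref{gen} and the structure theory of almost torsion free groups.
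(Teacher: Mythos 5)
Your implications (c)$\Rightarrow$(b) and (b)$\Rightarrow$(a) are correct. Note that the paper itself gives no proof of this theorem (it is quoted from the preprint \cite{ClassIT}), but your contrapositive construction for (b)$\Rightarrow$(a) is essentially the one carried out in the proposition of Section~3: a prime $p$ with $G[p]$ infinite and proper, $u_n=b+a_n$ with $pb\neq 0$, and the observation that $pu_n=pb$ is constant and nonzero on every subsequence. (The paper concludes via continuity of $x\mapsto px$, you via Theorem~\ref{characterization_T} with $k=p$; both are valid, and the paper's version even yields the stronger statement that no subsequence is $\I$-convergent for any proper ideal.)

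The genuine gap is in (a)$\Rightarrow$(c), which is the substantial direction and which you present as a plan rather than a proof. The prime-exponent case is fine: an $\F_p$-independent subsequence with the finitely supported coordinate characters does give a Hausdorff precompact topology in which the subsequence is null. But in the almost torsion free case you explicitly leave the ``technical heart'' --- the digit-by-digit construction of a point-separating family of characters annihilating a fast tail --- unexecuted, and your preliminary trichotomy (after thinning, the sequence is $\Z$-independent, or lies in a finite-rank torsion-free part, or consists of $p$-elements of increasing order) is itself false as stated: mixed groups need not split, and a sequence such as $u_n=\bigl(\tfrac{1}{p^n}+\Z,\,1\bigr)$ in $\Z(p^\infty)\oplus\Z$ is one-to-one, has no independent infinite subsequence (since $p^{m}u_n=p^{m}u_{n'}$ for suitable $m$), consists of non-torsion elements, and generates a subgroup that is not torsion-free --- so it falls into none of your three cases. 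Handling such mixed sequences, sequences whose torsion parts spread over infinitely many primes of finite $p$-rank, and the finite-rank torsion-free case (e.g.\ one-to-one sequences in $\Q$) each require their own extraction argument before any Liouville-type or $p$-adic construction can begin, and the simultaneous point-separation you flag is precisely what makes the theorem nontrivial. As written, (a)$\Rightarrow$(c) is not established, so the cycle of implications is incomplete.
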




%


\section{Ideal convergence and ideal $T$- and $TB$-sequences}

\subsection{Ideals and ideal convergence}\hfill

If we impose no restrictions on the ideal, the definition of ideal convergence recalled in the introduction may give some unpleasant consequences as the following example shows.

\begin{example} \label{tr}
If $\I$ is the maximal ideal on $\N_0$ generated by the set $\N$ of all positive naturals (i.e., $I\in \I$ if and only if $0 \not \in I$), then  $u_n\stackrel{{\cal I}}{\longrightarrow}0$ if and only  $u_0 \in U$ for every $U \in \VV_\tau$,
all the other members of the sequence can be chosen arbitrarily. In case  $(G,\tau)$ is Hausdorff, this occurs precisely when $u_0=0$. Hence, if $(G,\tau)$ is Hausdorff, $u_n\stackrel{{\cal I}}{\longrightarrow}0$ if and only if $u_0=0$ (regardless of the rest of the sequence).
 \end{example}

%

If $(x_n)$ is a convergent sequence in a topological space, then every reordering $(x_{\pi(n)})$ where $\pi$ denotes a permutation of $\N_0$ is also convergent.
The next  example shows that in case of ideal convergence, this property fails as strongly as possible.

\begin{example}
Let $\pi$ be a permutation of $\N_0$. Call an ideal $\I$ of $\N$ {\bf $\pi$-invariant}
 if for every $I\in  \I$, also $\pi (I)\in \I$ for all $I\in  \I$.

(a) Obviously, $\FF in$ is $\pi$-invariant for every permutation $\pi$ of $\N_0$. Conversely, if a proper free ideal $\I$ on $\N_0$ is $\pi$-invariant for every permutation $\pi$ of $\N_0$, then $\I = \FF in$. Indeed, if $\I\not=\FF in$ then there exists $I\in\I$ such that both $I$ and $\N_0\setminus I$ are infinite. Let $\pi$ be a permutation of $\N_0$ which maps $I$ to $\N_0\setminus I$ and vice versa.
 Then    $\pi^{-1}(I) = \pi(I)\not\in \I$, so $\I$ is not  $\pi$-invariant.

(b) If the ideal $\I$ on $\N_0$ is $\pi$-invariant and $x_n\stackrel{\I}{\longrightarrow}x$,  then $(x_{\pi^{-1}(n)})$ is also $\I$-convergent to $x\in X$. Indeed, let $W$ be a neighborhood of $x$. Then $\{n\in \N_0:\ u_{\pi^{-1}(n)}\notin W\}=\pi (\{m\in\N_0 :\ u_m\notin W\})\in \I$, since $\{m:\ u_m\notin W\}\in I$ and since $\I$ is invariant under $\pi$ by assumption. 

(c) If an ideal $\I$ on $\N_0$ is not $\pi$-invariant, then for every non-trivial Hausdorff topological group $G$, there exists a sequence $(x_n)$ in $G$ such that  $x_n\stackrel{\I}{\longrightarrow}0$ in $G$, but $x_{\pi^{-1}(n)}\stackrel{\I}{\not\longrightarrow}0$. To this end fix an element $0\ne g\in G$, a member $I \in \I$ such that $\pi (I)\not \in \I$ and define $x_n=g$ for $n\in I$ and $x_n=0$ for $n\notin I$. Then $(x_n)$ converges to $0$ in $G$, while $x_{\pi^{-1}(n)}\stackrel{\I}{\not\longrightarrow}0$ in any Hausdorff group topology on $G$ (i.e., $(x_{\pi^{-1}(n)})$ is not a $T^\I$-sequence).
\end{example}

Recall the definition of natural density:

\begin{example}\label{Id}
 For $A\subseteq\N_0$, denote $$A(n)=\{i\in A: i\leq n\}=A\cap[0,n].$$ The \emph{lower} and the \emph{upper natural density of $A$} are
$$\underline{d}(A)=\liminf_{n\to\infty}\frac{|A(n)|}{n}\quad\text{and}\quad\overline{d}(A)=\limsup_{n\to\infty}\frac{|A(n)|}{n}.$$
If $\underline{d}(A)=\overline{d}(A)$, we say that the \emph{natural density of $A$} exists and denote it by $d(A)$.

Then $\I_d=\{A\subseteq \N_0 : d(A)=0\}$ is an ideal on $\N_0$ that contains $\FF in$.
\end{example}

%

\begin{remark}\label{I_conv}
\begin{enumerate}
\item[(a)] Usual convergence is equivalent to $\FF in$-convergence.
\item[(b)] $\I_d$-convergence is usually referred to as \emph{statistical convergence}. 
\item[(c)] If $\I_1$ and $\I_2$ are ideal on $\N_0$ such that $\I_1\subseteq \I_2$, then $\I_1$-convergence implies $\I_2$-convergence, in particular, every convergent sequence is statistically convergent.
\end{enumerate}
\end{remark}


Recall that an ideal $\I$ on $\N_0$ is called a {\bf $P$-ideal} if for every sequence of elements $(I_n)$ in $\I$, there exists $I\in \I$ such that
$I\setminus I_n$ is finite for all $n\in\N$. The ideals $\mathcal Fin$ and $\I_d$ are free $P$-ideals.

 \begin{fact} \label{Remark}{~\cite[Proposition 3.2 and Theorem 3.2(ii)]{KSW}}
%
%
%
%
Let $\I$ be a proper free ideal on $\N_0$, let $(X,\tau)$ be a topological space, $x\in X$, and $(x_n)$ a sequence in $X$.
\begin{itemize}
\item[(a)] If for some $I\in \I$ the subsequence $(x_n)_{n\in\N_0\setminus I}$ converges to $x$, then the whole sequence $(x_n)$ ${\cal I}$-converges to $x$.
\item[(b)] If $\I$ is a $P$-ideal, $(X,\tau)$ is first countable and $(x_n)$ is $\I$-convergent to $x$, then there exists $I\in \I$ such that the subsequence $(u_n)_{n\in\N_0\setminus I}$ converges to $x$.
\end{itemize}
    \end{fact}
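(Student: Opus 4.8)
The plan is to treat the two parts separately: part (a) is a direct unwinding of the definitions, while part (b) is where the $P$-ideal hypothesis together with first countability does the work.

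For (a), I would fix an arbitrary neighborhood $U\in\VV_\tau(x)$ and show that $A_U:=\{n\in\N_0:\ x_n\notin U\}\in\I$. Since $(x_n)_{n\in\N_0\setminus I}$ converges to $x$ in the ordinary sense, there is $N\in\N$ with $x_n\in U$ for every $n\in\N_0\setminus I$ satisfying $n\ge N$; hence $A_U\subseteq I\cup\{0,1,\dots,N-1\}$. Now $I\in\I$ by hypothesis, while $\{0,\dots,N-1\}\in\FF in\subseteq\I$ because $\I$ is free, so $A_U\in\I$ by closure of $\I$ under finite unions and subsets. As $U$ was arbitrary, $x_n\stackrel{\I}{\longrightarrow}x$.

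For (b), I would first invoke first countability to fix a decreasing neighborhood base $U_1\supseteq U_2\supseteq\cdots$ at $x$. For each $k$ the $\I$-convergence gives $I_k:=\{n\in\N_0:\ x_n\notin U_k\}\in\I$. The $P$-ideal property, applied to the sequence $(I_k)_k$, then furnishes a single $I\in\I$ that absorbs all the $I_k$ modulo finite sets, i.e.\ $I_k\setminus I$ is finite for every $k$. I claim this $I$ is the desired set. Given any neighborhood $U$, choose $k$ with $U_k\subseteq U$ and an integer $M$ bounding the finite set $I_k\setminus I$. If $n\in\N_0\setminus I$ and $n>M$, then $n\in I_k$ would force $n\in I_k\setminus I\subseteq\{0,\dots,M\}$, a contradiction; hence $n\notin I_k$, i.e.\ $x_n\in U_k\subseteq U$. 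Thus all but finitely many terms of $(x_n)_{n\in\N_0\setminus I}$ lie in $U$, proving that this subsequence converges to $x$.

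The only genuine difficulty lies in (b), namely the passage from the countably many ``bad index sets'' $I_k$ to a single exceptional set $I\in\I$: this is precisely the content of the $P$-ideal property, and the point to get right is that $I_k\setminus I$ being merely \emph{finite} (not empty) already suffices, since discarding the finitely many offending indices together with the fixed set $I$ turns the nested base $(U_k)$ into a witness of ordinary convergence along $\N_0\setminus I$. Part (a), by contrast, is pure bookkeeping with the ideal axioms and the freeness of $\I$.
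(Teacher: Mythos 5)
Your proof is correct and follows essentially the same route as the paper's: in (a) you bound the bad index set $\{n:\ x_n\notin U\}$ by $I$ together with a finite set and invoke freeness and closure under finite unions, and in (b) you apply the $P$-ideal property to the bad sets $I_k$ coming from a countable neighborhood base to extract a single $I\in\I$ with each $I_k\setminus I$ finite. The only difference is cosmetic: the paper ends (b) with ``but this implies that $(u_n)_{n\in\N_0\setminus I}$ converges to $x$,'' whereas you spell out that last verification explicitly, which is exactly the right thing to make implicit.
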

\begin{proof}
(a) To prove the first assertion note that for $W\in{\cal V}_\tau(x)$, the set $\{n\in\N_0:u_n\notin W\}\subseteq \{n\in\N_0\setminus I:\ u_n\notin W\}\cup I$ belongs to $ {\cal I}$, since the first member of the union is finite.

(b) Let $\{W_n:n\in\N\}$ be a base of the filter $\mathcal V_\tau(x)$.
For every $n\in\N$ the set of indices $I_n=\{n\in\N_0:u_n\notin W_n\}$ belongs to ${\cal I}$. Since ${\cal I}$ is a $P$-ideal, there exist $I\in {\cal I}$ and a sequence $(F_n)$ in $\mathcal Fin$ such that $I_n\subseteq I\cup F_n$ for all $n\in\N_0$.
But this implies that $(u_n)_{n\in\N_0\setminus I}$ converges to $x$.
\end{proof}
%

\begin{fact}\label{I-cont}
Let $f\colon X\to Y$ be a continuous mapping between topological spaces and $(x_n)$ a sequence in $X$. If $x_n\stackrel{{\cal I}}{\longrightarrow}x$, then $f(x_n)\stackrel{{\cal I}}{\longrightarrow}f(x)$.
\end{fact}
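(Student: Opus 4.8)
The plan is to unwind the definition of $\I$-convergence and pull the condition back through $f$ using ordinary continuity. The statement to prove is: if $f\colon X\to Y$ is continuous and $x_n\stackrel{\I}{\longrightarrow}x$, then $f(x_n)\stackrel{\I}{\longrightarrow}f(x)$. By the definition of ideal convergence, I must show that for every neighborhood $V$ of $f(x)$ in $Y$, the set $\{n\in\N_0:\ f(x_n)\notin V\}$ belongs to $\I$.

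First I would fix an arbitrary neighborhood $V\in\VV_\tau(f(x))$ in $Y$. By continuity of $f$ at $x$, the preimage $f^{-1}(V)$ is a neighborhood of $x$ in $X$; call it $U$, so $U\in\VV_\tau(x)$ and $f(U)\subseteq V$. The crucial combinatorial observation is the set inclusion
\begin{equation*}
\{n\in\N_0:\ f(x_n)\notin V\}\subseteq\{n\in\N_0:\ x_n\notin U\}.
\end{equation*}
Indeed, if $x_n\in U=f^{-1}(V)$ then $f(x_n)\in V$, so the complementary implication gives exactly this inclusion (equivalently, $f(x_n)\notin V$ forces $x_n\notin f^{-1}(V)=U$).

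Next I would invoke the hypothesis $x_n\stackrel{\I}{\longrightarrow}x$: since $U$ is a neighborhood of $x$, the set $\{n\in\N_0:\ x_n\notin U\}$ lies in $\I$. Because $\I$ is an ideal, it is closed under taking subsets, so the smaller set $\{n\in\N_0:\ f(x_n)\notin V\}$ also belongs to $\I$. As $V$ was an arbitrary neighborhood of $f(x)$, this is precisely the assertion that $f(x_n)\stackrel{\I}{\longrightarrow}f(x)$.

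There is no real obstacle here: the argument is entirely formal and mirrors the classical proof that continuous maps preserve sequential limits, with the single subset-closure property of $\I$ replacing the finiteness argument used for ordinary convergence. The only point deserving a moment of care is the direction of the set inclusion above, which must be read off correctly from $f(U)\subseteq V$; everything else is routine. Notably, neither freeness nor properness nor the $P$-ideal condition is needed, so the statement holds for any ideal $\I$.
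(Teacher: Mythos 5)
Your proof is correct: the paper states Fact~\ref{I-cont} without proof, treating it as routine, and your argument --- pulling a neighborhood $V$ of $f(x)$ back to the neighborhood $f^{-1}(V)$ of $x$, noting the inclusion $\{n\in\N_0:\ f(x_n)\notin V\}\subseteq\{n\in\N_0:\ x_n\notin f^{-1}(V)\}$, and using closure of $\I$ under subsets --- is exactly the standard argument the authors implicitly rely on. Your closing observation is also accurate: no freeness, properness, or $P$-ideal hypothesis is needed, consistent with the paper imposing no such restriction in the statement.
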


By using this fact, we find examples of sequences that are never $\I$-convergent (actually, none of their subsequences is $\I$-convergent):

\begin{proposition}
Let $G$ be an infinite abelian group which is neither almost torsion-free nor of prime exponent. Then there is a one-to-one sequence $\uu$ in $G$
such that no subsequence of  $\uu$ is $\I$-convergent for any group topology on $G$ and any proper ideal $\I$ on $\N_0$.
In particular, $\uu$ is not a $T^{\I}$-sequence for any proper ideal $\I$ on $\N_0$.
\end{proposition}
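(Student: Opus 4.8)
The plan is to build $\uu$ so that one endomorphism which is continuous for \emph{every} group topology collapses it to a nonzero constant, and then let Fact~\ref{I-cont} together with the properness of $\I$ finish the job. Since $G$ is not almost torsion free, I fix a prime $p$ with $r_p(G)=\infty$. Then $G[p]=\{x\in G:px=0\}$ is infinite, so I may pick pairwise distinct $a_0,a_1,\dots\in G[p]$, each of order $p$. Since $G$ is infinite but not of prime exponent, its exponent is not $p$, whence $pG\neq\{0\}$; I fix $c\in G$ with $pc\neq0$. Set $u_n=c+a_n$. As the $a_n$ are distinct, $\uu=(u_n)_{n\in\N_0}$ is one-to-one, while $pu_n=pc+pa_n=pc$ is a constant $\neq0$ independent of $n$.

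Now I fix any subsequence $\vv=(u_{n_k})\sqsubseteq\uu$, any Hausdorff group topology $\tau$ on $G$ (these are the ones relevant for $T^\I$-sequences, so here ``$\I$-convergent'' is read as ``$\I$-convergent to $0$''), and any proper ideal $\I$ on $\N_0$, and I suppose for contradiction that $u_{n_k}\stackrel{{\cal I}}{\longrightarrow}0$ in $(G,\tau)$. The map $x\mapsto px$ is a continuous endomorphism for every group topology on $G$, so Fact~\ref{I-cont} yields $pu_{n_k}\stackrel{{\cal I}}{\longrightarrow}0$. But $pu_{n_k}=pc$ for every $k$, so the constant sequence with value $pc$ must $\I$-converge to $0$; that is, $\{k\in\N_0:pc\notin U\}\in\I$ for every $U\in\VV_\tau$. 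Choosing $U\in\VV_\tau$ with $pc\notin U$, which is possible because $\tau$ is Hausdorff and $pc\neq0$, this index set is all of $\N_0$, which is not in $\I$ since $\I$ is proper; a contradiction. Hence no subsequence of $\uu$ $\I$-converges to $0$ in any Hausdorff group topology and for any proper ideal $\I$, so in particular $\uu$ is not a $T^\I$-sequence for any proper $\I$.

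The only point that really needs care is the compatible extraction of the two algebraic ingredients: the \emph{same} prime $p$ must both witness $r_p(G)=\infty$, which furnishes the order-$p$ summands $a_n$ that keep $\uu$ injective yet are annihilated by multiplication by $p$, and satisfy $pG\neq\{0\}$, which furnishes the surviving constant $pc$. The first is immediate from non-almost-torsion-freeness; for the second, $pG=\{0\}$ would force $\exp(G)\in\{1,p\}$, hence $\exp(G)=p$ as $G$ is infinite, i.e.\ prime exponent, which is excluded by hypothesis. Everything else is uniform and soft: because $pu_{n_k}=pc$ for every $k$ regardless of which subsequence is chosen, the contradiction is literally identical for all subsequences, all Hausdorff group topologies, and all proper ideals, with no freeness, $P$-ideal, or countability assumption on $\I$. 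I note finally that Hausdorffness enters exactly once, to pass from ``$pc$ lies in every neighbourhood of $0$'' to ``$pc=0$'', and is harmless here since $T^\I$- and $TB^\I$-sequences are defined through Hausdorff, respectively precompact, topologies.
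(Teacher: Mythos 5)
Your proof is correct and follows essentially the same route as the paper's: the same construction $u_n=c+a_n$ with $(a_n)$ one-to-one in the infinite subgroup $G[p]$ and $pc\neq 0$, the continuity of $x\mapsto px$ together with Fact~\ref{I-cont}, and the observation that a constant nonzero sequence cannot $\I$-converge to $0$ in a Hausdorff topology when $\I$ is proper. The only differences are that you spell out two steps the paper leaves implicit, namely that ``not of prime exponent'' forces $pG\neq\{0\}$ for the chosen $p$, and that the statement must be read as $\I$-convergence to $0$ in Hausdorff topologies -- both welcome clarifications, not deviations.
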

\begin{proof}
Let $\tau$ be a group topology on $G$.
By assumption,  there exists a prime $p$ such that $G[p]$ is infinite and $G[p]$ is a proper subgroup of $G$. Choose a one-to-one sequence $(a_n)$ in $G[p]$ and an element $b\in G\setminus G[p]$. Let $u_n=b+a_n$ for every $n\in\N_0$, and $\uu=(u_n)$. Then $p\uu=(pu_n)$, as well as any subsequence of $p\uu$, is the constant sequence $(pb)$. Since the multiplication mapping $G\to G,\ x\mapsto px$ is continuous, 
the sequence $\uu$ (as well as any subsequence of its subsequences) is not $\I$-convergent for any ideal $\I$ on $\N_0$.
\end{proof}

For example, let $\I$ be a proper ideal on $\N_0$ and $G = \Z/4\Z\times \Z/2\Z^{(\N_0)}$, then let $c=1+4\Z$ be a generator of $\Z/4\Z$ and $c_n$ a generator of the $n$-th copy of $\Z/2\Z$ for every $n\in\N_0$. The sequence $((c,c_n))$ is not a $T^{\I}$-sequence 
for any proper 
ideal on $\N_0$.


We modify this example to give a one-to-one sequence which is $\I$-convergent, but a suitable permutation of the sequence is no longer $\I$-convergent. 

\begin{example}
Let $\I$ be a proper ideal on $\N_0$. Then there exists $I\in\I$ such that both $I$ and $\N_0\setminus I$ are infinite. Let $\pi$ be a permutation of $\N_0$ which maps $I$ to $\N_0\setminus I$ (and vice versa).

With $G = \Z/4\Z\times \Z/2\Z^{(\N_0)}$ and $c=1+4\Z$ as above, define $u_n=(c,c_n)$ for $n\in I$ and $u_n=(0,c_n)$ for $n\notin I$. Then $(u_n)_{n\notin I}=(0,c_n)_{n\notin I}$ converges to $(0,0)$ in the product topology, in particular the whole sequence $\uu=(u_n)$ is $\I$-convergent to $0$ in the product topology by Fact~\ref{Remark}. So $\uu$ is a $T^\I$-sequence. 

We show now that  $(u_{\pi(n)})$ is not a $T^\I$-sequence. Indeed,
$$u_{\pi(n)}=\left\{\begin{array}{c@{\ :\ }c}(0,c_{\pi(n)})&\pi(n)\notin I\\ (c,c_{\pi(n)})&\pi(n)\in I \end{array}\right.=
\left\{\begin{array}{c@{\ :\ }c}(0,c_{\pi(n)})&n\in  I\\ (c,c_{\pi(n)})&n\notin I\end{array}\right..$$ If $(u_{\pi(n)})$ were a $T^\I$-sequence, then so would be $(2u_{\pi(n)})$. Choose a Hausdorff group topology $\tau$ in which $(2u_{\pi(n)})$ converges to $(0,0)$ and choose a neighborhood $W\in \VV_\tau$ that does not contain $(2c,0)$. Then $\{n\in\N_0:\ 2u_{\pi(n)}\notin W\}=\N_0\setminus I\notin \I$,
since $\I$ was assumed to be proper. This shows that $(u_n)$ is a $T^\I$-sequence, while $(u_{\pi(n)})$ fails to be a $T^\I$-sequence.
\end{example}

\subsection{First results on ideal $T$- and $TB$-sequences}\label{first}\hfill



%

As an immediate consequence of Remark~\ref{I_conv} and Theorem~\ref{ExTs} we have:

\begin{fact}
Let $\I$ be a free ideal on $\N_0$. Every infinite abelian group $G$ has a $TB^\I$-sequence.
\end{fact}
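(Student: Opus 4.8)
The plan is to reduce to the already-settled classical case $\I=\FF in$ provided by Theorem~\ref{ExTs}, and then to upgrade ordinary convergence to $\I$-convergence for free by exploiting the inclusion $\FF in\subseteq\I$. Concretely, I would first invoke Theorem~\ref{ExTs}: since $G$ is infinite abelian, it carries a non-trivial $TB$-sequence $\uu$. By the definition of a $TB$-sequence, this precisely means there is a \emph{precompact} group topology $\tau$ on $G$ with $u_n\longrightarrow 0$ in $(G,\tau)$. The key observation is that I do not need to build any new topology: the very same $\tau$ will serve as the witnessing precompact topology for the ideal version.

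Next, since $\I$ is assumed free we have $\FF in\subseteq\I$, and by the remark recorded just after the definition of ideal convergence (equivalently Remark~\ref{I_conv}(c)), ordinary convergence in $(G,\tau)$ then forces $u_n\stackrel{\I}{\longrightarrow}0$ in $(G,\tau)$. Thus $\tau$ is a precompact group topology in which $\uu$ is $\I$-null, so $\uu$ is a $TB^\I$-sequence, as required. I expect no genuine obstacle in this argument: it is an immediate combination of Theorem~\ref{ExTs} with the monotonicity of $\I$-convergence in the ideal. The only point meriting a word of care is conceptual rather than technical, namely to notice that $\I$-convergence is \emph{weaker} than convergence whenever $\FF in\subseteq\I$, so that passing from $\FF in$ to a larger free ideal $\I$ can only preserve (and never destroy) the null-convergence of $\uu$; hence the precompact topology produced in the classical setting automatically does the job here.
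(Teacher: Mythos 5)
Your proof is correct and follows exactly the paper's route: the authors also obtain this fact as an immediate consequence of Theorem~\ref{ExTs} together with Remark~\ref{I_conv}(c), i.e., a non-trivial $TB$-sequence with its witnessing precompact topology is automatically a $TB^\I$-sequence since $\FF in\subseteq\I$ makes $\I$-convergence weaker than ordinary convergence. Nothing to add.
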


Next we find analogous results with respect to Propositions~\ref{Met} and~\ref{Metb}.

\begin{proposition}\label{MetStat*}
Let $\uu$ be a sequence in an abelian group $G$.
\begin{itemize}
\item[(a)] If ${\bf u}$ is a $T^\I$-sequence, then there exists a metrizable group topology $\tau$ on $G$, such that
$u_n\stackrel{{\cal I}}{\longrightarrow}0$ in $(G,\tau)$.
\item[(b)] If ${\bf u}$ is a $ TB^\I$-sequence in $G$ and $G=\langle \{u_n:n\in\N_0\}\rangle$ holds, then there exists a precompact metrizable group topology $\tau$ on $G$, such that
$u_n\stackrel{{\cal I}}{\longrightarrow}0$ in $(G,\tau)$.
\end{itemize}
\end{proposition}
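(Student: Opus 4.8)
The whole plan rests on a single elementary observation that I would isolate first: $\I$-convergence of $\uu$ to $0$ is preserved under passing to a \emph{coarser} group topology. Indeed, if $\tau_1\subseteq\tau_2$ are group topologies on $G$ and $u_n\stackrel{\I}{\longrightarrow}0$ in $(G,\tau_2)$, then every $\tau_1$-neighborhood $U$ of $0$ is also a $\tau_2$-neighborhood, so $\{n\in\N_0:u_n\notin U\}\in\I$; hence $u_n\stackrel{\I}{\longrightarrow}0$ in $(G,\tau_1)$ as well. Consequently, in both items it suffices to produce a suitable metrizable (resp.\ precompact metrizable) topology sitting \emph{below} some topology already known to make $\uu$ $\I$-converge.

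For (a): since $\uu$ is a $T^\I$-sequence, fix a Hausdorff group topology $\sigma$ on $G$ with $u_n\stackrel{\I}{\longrightarrow}0$. The countable subgroup $H=\langle u_n:n\in\N_0\rangle$ carries the restricted Hausdorff topology $\sigma|_H$, and $\uu$ (all of whose terms lie in $H$) still $\I$-converges to $0$ there. I would then endow $G$ with the group topology $\sigma''$ having as a neighborhood base at $0$ a neighborhood base of $0$ in $(H,\sigma|_H)$: this makes $H$ an \emph{open} countable subgroup of $(G,\sigma'')$, keeps the topology Hausdorff (its restriction to the open subgroup $H$ is the Hausdorff topology $\sigma|_H$), and, since every $\sigma''$-neighborhood of $0$ contains a $\sigma|_H$-neighborhood, retains $u_n\stackrel{\I}{\longrightarrow}0$. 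Now Proposition~\ref{Met}(a) applied to $\mathcal T=\sigma''$ yields a metrizable group topology $\tau\subseteq\sigma''$, and the coarsening observation gives $u_n\stackrel{\I}{\longrightarrow}0$ in $(G,\tau)$.

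For (b): since $\uu$ is a $TB^\I$-sequence, fix a precompact (hence Hausdorff, totally bounded) group topology $\sigma$ on $G$ with $u_n\stackrel{\I}{\longrightarrow}0$, and write $\sigma=\sigma(G,D)$ with $D=(G,\sigma)^\wedge$ a point-separating subgroup of $\mathrm{Hom}(G,\T)$. Here $G=\langle u_n:n\in\N_0\rangle$ is countable, so I would choose, for each $g\in G\setminus\{0\}$, a character $\chi_g\in D$ with $\chi_g(g)\neq 0$, and set $D'=\langle\chi_g:g\in G\setminus\{0\}\rangle$. Then $D'$ is a countable, point-separating subgroup of $D$, so $\sigma(G,D')$ is a precompact Hausdorff group topology; being determined by countably many characters it is first countable, hence metrizable by Birkhoff--Kakutani. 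Finally $\sigma(G,D')\subseteq\sigma(G,D)=\sigma$, so the coarsening observation gives $u_n\stackrel{\I}{\longrightarrow}0$ in $(G,\sigma(G,D'))$, and $\tau=\sigma(G,D')$ is as required.

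The routine verifications (that $\sigma''$ is a well-defined Hausdorff group topology, and that a precompact topology with countable character subgroup is first countable) are standard. The only genuinely delicate point is the one addressed in (a), namely arranging an open countable subgroup \emph{without destroying} $\I$-convergence, which is precisely why one must replace the given $\sigma$ by $\sigma''$ before invoking Proposition~\ref{Met}(a); I expect this to be the main obstacle, everything else following from the coarsening observation together with Proposition~\ref{Met} and the structure theory of precompact topologies underlying Proposition~\ref{Metb}.
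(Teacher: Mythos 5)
Your proof is correct. For part (a) it is in substance the paper's argument: the paper applies Proposition~\ref{Met}(a) directly to $(G,T^{\I}_{\uu})$, observing that the countable subgroup $H=\langle u_n:n\in\N_0\rangle$ is open in $T^{\I}_{\uu}$ --- and the justification of that observation is exactly your construction (refine a witnessing topology by declaring $(H,\sigma\restriction_H)$ open, note that $\I$-convergence survives, then invoke maximality of $T^{\I}_{\uu}$). You simply unpack this and avoid appealing to the finest topology at all, which makes your version independent of the supremum argument establishing $T^{\I}_{\uu}$. For part (b) your route genuinely differs: the paper again applies Proposition~\ref{Met}(a), this time to $(G,T^{\I}_{\mathbf p\uu})$ with $G$ itself as the open countable subgroup, obtains a coarser metrizable $\tau$, notes that total boundedness descends to coarser group topologies (so $\tau$, being metrizable hence Hausdorff, is precompact), and concludes via Fact~\ref{I-cont}; you instead write a witnessing precompact topology as $\sigma(G,D)$ via Comfort--Ross, cut $D$ down to a countable point-separating subgroup $D'$, and get metrizability of $\sigma(G,D')$ from first countability plus Birkhoff--Kakutani. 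Both are sound; the paper's proof is more uniform (both items reduce to Proposition~\ref{Met}) and shorter given the machinery already in place, while yours exhibits the precompact metrizable topology explicitly and in effect reproves the countable case of Proposition~\ref{Metb} from scratch rather than quoting it --- note that your closing remark about ``the structure theory underlying Proposition~\ref{Metb}'' is the only place that result is even morally used, since countability of $G=\langle\{u_n:n\in\N_0\}\rangle$ makes the cardinality hypothesis $|G|\le\mathfrak{c}$ automatic. Your isolated coarsening observation is precisely Fact~\ref{I-cont} applied to the identity map, which is also how the paper phrases it.
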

\begin{proof}
(a) Apply Proposition~\ref{Met} to $(G,T^{{\cal I}}_{{\bf u}})$ and observe that the countable subgroup $\langle u_n: n\in\N_0\rangle$ is open in $T^{{\cal I}}_{{\bf u}}$.

(b) Apply Proposition~\ref{Met} to $(G,T^{{\cal I}}_{{\bf p}{\bf u}})$ and the open  countable subgroup $G$. Hence there exists a coarser metrizable group topology $\tau $ on $G$. Since $T^{{\cal I}}_{{\bf p}{\bf u}}$ is precompact, so is $\tau$. Since the identity $(G,T^{{\cal I}}_{{\bf p}{\bf u}})\to (G,\tau)$ is continuous, it follows from Fact~\ref{I-cont} that $u_n\stackrel{{\cal I}}{\longrightarrow}0$ in $(G,\tau)$.
%
\end{proof}

\section{Explicit description of a neighborhood base at $0$ in $T^{{\cal I}}_{{\bf u}}$}\label{explicitsec}

Let $\I$ be a free ideal on $\N_0$ and ${\bf u}$ be a $T^\I$-sequence in an abelian group $G$.
In this section we describe a neighborhood base at $0$ of $(G,T^{{\cal I}}_{{\bf u}})$.

\begin{remark} \label{maximality}
Let $\I$ be a free ideal on $\N_0$ and ${\bf u}$ be a sequence in an abelian group $G$. Since $\uu$ is $\I$-convergent to $0$ in the indiscrete  topology (which is a group topology), there always exists a finest group topology on $G$ in which $\uu$ is $\I$-convergent to $0$. (Just take the supremum of all group topologies with this property.)

If $\uu$ is a $T^\I$-sequence, then $T_\uu^\I$ is the supremum of all (not necessarily Hausdorff) group topologies on $G$ with the property that $\uu$ $\I$-converges to $0$. In fact, if $\sigma$ is a group topology on $G$ such that $u_n\overset{\I}{\longrightarrow}0$ in $(G,\sigma)$, then $u_n\overset{\I}{\longrightarrow}0$ also in $(G,T_\uu^\I\vee\sigma)$. Since $T_\uu^\I\vee\sigma$ is Hausdorff, $T_\uu^\I\vee\sigma=T_\uu^\I$, hence $\sigma\subseteq T_\uu^\I$.

Analogously, if $\uu$ is a $TB^\I$-sequence, then $T_{\bf pu}^\I$ is the finest totally bounded group topology on $G$ such that $\uu$ $\I$-converges to $0$.
\end{remark}

\begin{notation}\label{NotationBaicNbd}
Let $\I$ be free ideal on $\N_0$ and let $\uu$ be a a sequence in an abelian group $G$.
For $I\in\I$, let $$\uu_I=\{0\}\cup \{\pm u_n:\ n\notin I\}.$$
For $(I_n)_{n\in\N}$ a sequence of elements in ${\cal I}$, let
$$\uu_{(I_n)}=\bigcup_{N\in\N}(\uu_{I_1}+\ldots+\uu_{I_N}).$$
Let ${\cal I}^{\N}_{\uparrow}$ denote the family of all  increasing sequences in $\I$.
\end{notation}

\begin{remark} \label{41}
\begin{enumerate}
\item[(a)]
Let $\I$ be free ideal on $\N_0$ and $\uu$ a sequence in an abelian group $G$.
If $W$ is a symmetric neighborhood of $0$ in $T^{{\cal I}}_{{\bf u}} $, then there exists $I\in {\cal I}$ such that $\uu_I\subseteq W$.

\item[(b)] Recall that for every neighborhood $W_0$ of $0$ in a topological abelian group $(G,\tau)$, there exists a sequence $(W_n)$ of neighborhoods of $0$ in $\tau$ such that $W_{n+1}+W_{n+1}\subseteq W_n$ for all $n\in\N_0$ and hence $\bigcup_{n\in\N}(W_1+\ldots+W_n)\subseteq W_0$ holds.
\end{enumerate}
\end{remark}




\begin{proposition}\label{explicit}
Let $\I$ be a free ideal on $\N_0$ and let ${\bf u}$ be a $T^\I$-sequence in an abelian group $G$.
The family $\dis (\uu_{(I_n)})_{(I_n)\in {\cal I}^{\N}_{\uparrow}}$ forms a neighborhood base at $0$
for the finest (non-necessarily Hausdorff) group topology on $G$ in which $\uu$ $\I$-converges to $0$.
%
\end{proposition}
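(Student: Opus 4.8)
The plan is to produce a group topology $\tau_0$ having $\mathcal B:=\{\uu_{(I_n)}:(I_n)\in\I^{\N}_{\uparrow}\}$ as a neighborhood base at $0$, and then to identify it with the finest group topology $\tau^*$ in which $\uu$ $\I$-converges to $0$; since $\uu$ is a $T^\I$-sequence, Remark~\ref{maximality} gives $\tau^*=T^\I_\uu$. First I would verify the standard criterion for a family of symmetric subsets containing $0$ to be a neighborhood base at $0$ of a (not necessarily Hausdorff) group topology: (a) $0\in\uu_{(I_n)}$; (b) $\mathcal B$ is downward directed by inclusion; (c) for each $U\in\mathcal B$ there is $V\in\mathcal B$ with $V+V\subseteq U$; (d) each member is symmetric; (e) for each $U\in\mathcal B$ and each $a\in U$ there is $V\in\mathcal B$ with $a+V\subseteq U$. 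Here (a) and (d) are immediate, as $0\in\uu_{I_1}\subseteq\uu_{(I_n)}$ and each $\uu_I$, hence each Minkowski sum of such, is symmetric. For (b), given $(I_n),(J_n)$ the termwise union $(I_n\cup J_n)$ lies in $\I^{\N}_{\uparrow}$ and satisfies $\uu_{I_n\cup J_n}\subseteq\uu_{I_n}\cap\uu_{J_n}$, whence $\uu_{(I_n\cup J_n)}\subseteq\uu_{(I_n)}\cap\uu_{(J_n)}$. For (e), if $a\in\uu_{I_1}+\dots+\uu_{I_{N_0}}$, then the shifted sequence $J_k:=I_{k+N_0}$ gives $a+\uu_{(J_n)}\subseteq\uu_{(I_n)}$ by concatenating the two summand blocks.

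The crux is (c), and I expect it to be the main obstacle. I would set $J_k:=I_{2k}$ and prove $\uu_{(J_n)}+\uu_{(J_n)}\subseteq\uu_{(I_n)}$. The key point is that monotonicity of $(I_n)$ yields $\uu_{I_{2k}}\subseteq\uu_{I_{2k-1}}$. Writing $g=\sum_{k=1}^{M}v_k$ and $h=\sum_{k=1}^{M}w_k$ with $v_k,w_k\in\uu_{J_k}=\uu_{I_{2k}}$ (one may pad to a common length $M$ since $0\in\uu_{J_k}$), I place $w_k$ into the odd slot $2k-1$, which is legitimate because $w_k\in\uu_{I_{2k}}\subseteq\uu_{I_{2k-1}}$, and $v_k$ into the even slot $2k$. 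This exhibits $g+h$ as a member of $\uu_{I_1}+\uu_{I_2}+\dots+\uu_{I_{2M}}\subseteq\uu_{(I_n)}$. This interleaving bookkeeping is the delicate step; the remaining axioms are routine. The criterion then delivers a group topology $\tau_0$ with base $\mathcal B$.

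It remains to show $\tau_0=\tau^*$ by two inclusions. For $\tau_0\subseteq\tau^*$ I would check that $\uu$ $\I$-converges to $0$ in $\tau_0$: it suffices to test a basic neighborhood $\uu_{(I_n)}$, and since $\uu_{I_1}\subseteq\uu_{(I_n)}$ we obtain $\{m:u_m\notin\uu_{(I_n)}\}\subseteq I_1\in\I$, so this index set belongs to $\I$; as $\tau^*$ is the finest topology with this convergence property, $\tau_0\subseteq\tau^*$. For the reverse inclusion, let $W_0$ be any $\tau^*$-neighborhood of $0$. By Remark~\ref{41}(b) there are neighborhoods $(W_n)$ of $0$ with $\bigcup_{N}(W_1+\dots+W_N)\subseteq W_0$; replacing each $W_n$ by $W_n\cap(-W_n)$ preserves this inclusion and makes the $W_n$ symmetric, so Remark~\ref{41}(a) yields $I_n\in\I$ with $\uu_{I_n}\subseteq W_n$. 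Passing to $I_1\cup\dots\cup I_n$ makes $(I_n)$ increasing without losing $\uu_{I_n}\subseteq W_n$, and then $\uu_{(I_n)}\subseteq\bigcup_{N}(W_1+\dots+W_N)\subseteq W_0$ shows that $W_0$ contains a member of $\mathcal B$, i.e.\ $W_0$ is a $\tau_0$-neighborhood of $0$.

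Combining the two inclusions gives $\tau^*=\tau_0$, so $\mathcal B$ is a neighborhood base at $0$ for $\tau^*$, which is exactly the asserted finest (not necessarily Hausdorff) group topology in which $\uu$ $\I$-converges to $0$. The only genuinely nontrivial ingredient is the reindexing in step (c); both the convergence check and the maximality argument reduce quickly to Remark~\ref{41}, and the containment $\uu_{I_1}\subseteq\uu_{(I_n)}$ does the bulk of the work in establishing convergence.
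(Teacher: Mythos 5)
Your proposal is correct and takes essentially the same route as the paper's proof: the same index-doubling/interleaving trick giving $\uu_{(I_{2n})}+\uu_{(I_{2n})}\subseteq \uu_{(I_n)}$, the same convergence check via $\uu_{I_1}$, and the same maximality argument combining Remark~\ref{41}(b) with the passage to the increasing unions $J_n=\bigcup_{m\le n}I_m$. Your extra condition (e) (finding $V$ with $a+V\subseteq U$ for $a\in U$) is verified correctly but is redundant, since for an abelian group the filter-base, symmetry, and $V+V\subseteq U$ axioms already yield a group topology with the given family as a neighborhood base at $0$.
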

\begin{proof}
Let $(I_n),(J_n)\in\I^\N_{\uparrow}$. Then $(I_n\cup J_n)\in\I^\N_{\uparrow}$ and
$\uu_{(I_n\cup J_n)}\subseteq \uu_{(I_n)}\cap\uu_{(J_n)}.$
The set $\uu_{(I_n)}$ is symmetric and contains $0$.
 From the monotonicity
of $(\uu_{I_k})$ it follows that $(\uu_{I_2}+\ldots+ \uu_{I_{2n}})+(\uu_{I_2}+\ldots +\uu_{I_{2m}})\subseteq \uu_{I_1}+\ldots+\uu_{I_{2\max(m,n)}}\subseteq \uu_{(I_k)}$ for all $n,m\in\N$. This shows that
$\uu_{(I_{2n})}+\uu_{(I_{2n})}\subseteq \uu_{(I_n)}$. 

So far, we have shown that $(\uu_{(I_n)})_{(I_n)\in\I_\uparrow^\N}$ is a neighborhood base of a group topology $\eta$ on $G$. Next, we prove that $u_n\stackrel{{\cal I}}{\longrightarrow}0$ in $(G,\eta)$. To this end, fix a sequence $(I_k)$ in ${\cal I}^\N_\uparrow$. It is a consequence of
$$\{n\in \N:\ u_n\notin \uu_{(I_k)}\}\subseteq \{n\in \N:\ u_n\notin \uu_{I_1}\}\subseteq I_1\in {\cal I}$$
that $\{n\in \N:\ u_n\notin \uu_{(I_k)}\}\in {\cal I}$.

Conversely, let $\tau$ be any group topology on $G$ in which $\uu$ $\I$-converges to $0$. Fix a neighborhood $W_0\in \VV_\tau(0)$. According to Remark~\ref{41}(b), there exists a sequence $(W_n)$ of symmetric neighborhoods of $0$ such that $\bigcup_{N\in\N}(W_1+\ldots+W_N)\subseteq W$.
Since $\uu$ is $\I$-convergent to $0$ in $(G,\tau)$, for every $n\in\N$ the set $I_n=\{k\in\N:\  u_k\notin W_n\}\in\I$. In other words,
$\{u_k:\ k\notin I_n\}\subseteq W_n$ for all $n\in\N$. Since $W_n$ was assumed to be symmetric, we obtain
$\uu_{I_n}\subseteq W_n$. Let $J_n=\bigcup_{m\le n}I_m$; then $(J_n)\in\I^\N_\uparrow$ and, since $\uu_{J_n}\subseteq \uu_{I_n}$ for all $n\in\N$,  we may conclude that
$\uu_{(J_n)}\subseteq \bigcup_{N\in\N}(W_1+\ldots+W_N)\subseteq W_0$. This shows that $\eta$ is finer than $\tau$.
\end{proof}

\begin{corollary}
Let $\I$ be a free ideal on $\N_0$ and let $\uu$ be a $T^{\I}$-sequence in an abelian group $G$. Then $(\uu_{(I_n)})_{(I_n)\in\I^\N_\uparrow}$ is a neighborhood base at $0$ for $T^\I_{\uu}$.
\end{corollary}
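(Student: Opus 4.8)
The plan is to obtain this as an immediate consequence of Proposition~\ref{explicit} combined with the identification recorded in Remark~\ref{maximality}. Proposition~\ref{explicit} already shows that the family $(\uu_{(I_n)})_{(I_n)\in\I^\N_\uparrow}$ is a neighborhood base at $0$ for the finest (not necessarily Hausdorff) group topology $\eta$ on $G$ in which $\uu$ $\I$-converges to $0$. Thus the only thing left to verify is that this topology $\eta$ is in fact $T^\I_\uu$.

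This last point is exactly what Remark~\ref{maximality} supplies. Since $\uu$ is assumed to be a $T^\I$-sequence, the Hausdorff topology $T^\I_\uu$ exists, and the argument in that remark shows it to be the supremum of all group topologies---Hausdorff or not---in which $\uu$ $\I$-converges to $0$: given any group topology $\sigma$ with $u_n\overset{\I}{\longrightarrow}0$, one has $u_n\overset{\I}{\longrightarrow}0$ also in $T^\I_\uu\vee\sigma$, and since the latter is Hausdorff it must coincide with $T^\I_\uu$, whence $\sigma\subseteq T^\I_\uu$. Therefore $T^\I_\uu$ is the finest group topology in which $\uu$ $\I$-converges to $0$, i.e.\ $\eta=T^\I_\uu$. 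Combining this with the previous paragraph yields the claim.

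I expect no real obstacle here: all the substantive work---checking that the sets $\uu_{(I_n)}$ form a filter base for a group topology, that this topology makes $\uu$ $\I$-converge to $0$, and that it is the finest one with this property---has already been discharged in the proof of Proposition~\ref{explicit}. The single conceptual point worth flagging is the passage from ``finest possibly non-Hausdorff topology'' to $T^\I_\uu$; it is precisely the hypothesis that $\uu$ be a $T^\I$-sequence (so that a Hausdorff group topology with $u_n\overset{\I}{\longrightarrow}0$ exists at all) that lets Remark~\ref{maximality} collapse this distinction.
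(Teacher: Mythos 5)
Your proof is correct and coincides with the paper's intended argument: the corollary is left as an immediate consequence of Proposition~\ref{explicit} together with Remark~\ref{maximality}, which identifies $T^\I_\uu$ (for $\uu$ a $T^\I$-sequence) with the finest, not necessarily Hausdorff, group topology in which $\uu$ $\I$-converges to $0$. You have also correctly isolated the one point where the $T^\I$-sequence hypothesis is used, so nothing further is needed.
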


\begin{remark}
For $\I = \FF in$, every increasing sequence $(I_n)$ in $\FF in^\N_\uparrow$ can be dominated by a sequence of the form $(\{1,\ldots,m_n\})$ where $(m_n)$ is an increasing sequence of natural numbers. 
Then
$$\uu_{(m_n)}=\uu_{(\{1,\ldots,m_n\})}=\bigcup_{N\in\N}(\uu_{m_1}+\ldots+\uu_{m_N})$$
and $(\uu_{(m_n)})$ where $(m_n)$ runs through all  increasing sequence in $\N $ forms a neighborhood base at $0$ for $T_{\uu}$. This is exactly the Protasov-Zelenyuk topology established in~\cite{PZ} (see also~\cite{Schar} and~\cite[\S 5.3.3]{ITG}).
\end{remark}

\begin{corollary}\label{CoroGlory} Let $\I$ be a free ideal on $\N_0$ and ${\bf u}$ a $T^\I$-sequence in an abelian group $G$. Then for every neighborhood $U$ of $0$ in $T_{{\bf u}}^{{\cal I}}$, there exists an
increasing sequence $(I_n)$ in $\I$ such that for every $N\in\N$, $N\odot \uu_{I_N}\subseteq U$. 
\end{corollary}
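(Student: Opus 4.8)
The plan is to read off the conclusion almost directly from the neighborhood base description established in the Corollary immediately preceding this one, and then to exploit the monotonicity of the sets $\uu_{I_n}$ that is forced by the increasing-ness of $(I_n)$. First I would invoke that Corollary: the family $(\uu_{(I_n)})_{(I_n)\in\I^\N_\uparrow}$ is a neighborhood base at $0$ for $T^\I_\uu$. Hence, given the neighborhood $U$ of $0$, I can choose an increasing sequence $(I_n)\in\I^\N_\uparrow$ with $\uu_{(I_n)}\subseteq U$. My claim is that this very same $(I_n)$ already satisfies the required inclusions $N\odot\uu_{I_N}\subseteq U$; no modification of the sequence is needed.

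The key observation is a monotonicity statement. Since $(I_n)$ is increasing, $I_k\subseteq I_N$ whenever $k\le N$, so $\{n:\ n\notin I_N\}\subseteq\{n:\ n\notin I_k\}$ and therefore $\uu_{I_N}\subseteq\uu_{I_k}$; that is, the sequence of sets $(\uu_{I_n})$ is decreasing. Moreover, each $\uu_{I_N}$ is by definition symmetric and contains $0$, so $\{0\}\cup\pm\uu_{I_N}=\uu_{I_N}$ and the operation $N\odot\uu_{I_N}$ collapses to the plain $N$-fold Minkowski sum $\uu_{I_N}+\ldots+\uu_{I_N}$. Applying the monotonicity summand by summand, the $k$-th copy satisfies $\uu_{I_N}\subseteq\uu_{I_k}$ for each $k\le N$, and since Minkowski addition respects inclusions we obtain $N\odot\uu_{I_N}\subseteq\uu_{I_1}+\ldots+\uu_{I_N}$.

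Finally I would note that $\uu_{I_1}+\ldots+\uu_{I_N}$ is precisely the $N$-th term of the union defining $\uu_{(I_n)}$, so it is contained in $\uu_{(I_n)}$, whence $N\odot\uu_{I_N}\subseteq\uu_{(I_n)}\subseteq U$ for every $N\in\N$, exactly as claimed. The only point calling for care — and the step I would flag as the nominal obstacle — is recognizing that the sequence furnished by the neighborhood base needs no adjustment and that the $\odot$-operation degenerates to an ordinary sum because $\uu_{I_N}$ is already symmetric and contains $0$; once these two remarks are in place the argument is immediate, so I do not anticipate any real difficulty.
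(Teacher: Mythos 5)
Your proof is correct and follows essentially the same route as the paper: the paper likewise picks $(I_n)\in\I^\N_\uparrow$ with $\uu_{(I_n)}\subseteq U$ (via Proposition~\ref{explicit}, of which the corollary you cite is the direct Hausdorff-case consequence) and concludes $N\odot\uu_{I_N}\subseteq\uu_{I_1}+\ldots+\uu_{I_N}\subseteq U$, with the monotonicity $\uu_{I_N}\subseteq\uu_{I_k}$ for $k\le N$ left implicit where you spell it out. No gap; your extra remarks on symmetry and the collapse of $\odot$ to a plain sum are exactly the details the paper suppresses.
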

\begin{proof}
By Proposition~\ref{explicit}, there exists $(I_n)_{n\in\N}\in\I^\N_\uparrow$ such that $\uu_{(I_n)}=\bigcup_{N\in\N}(\uu_{I_1}+\ldots+\uu_{I_N})\subseteq U$. Hence, for every $N\in\N$, $N\odot \uu_{I_N}\subseteq \uu_{I_1}+\ldots+\uu_{I_N}\subseteq U$.
\end{proof}

Let $\I$ be a free ideal on $\N_0$. Call a sequence $\uu$ in an abelian group $G$ {\bf trivial} {\rm mod} $\I$, if $\{n\in \N: u_n \ne 0\}\in \I$. These are exactly the sequences that  $\I$-converge to 0 in the discrete topology on $G$.

\begin{corollary}
Let $\I$ be a free ideal on $\N_0$ and let $\uu$ be a sequence in an abelian group $G$. The following assertions are equivalent:
\begin{enumerate}
\item[(a)] $\uu$ is trivial {\rm mod} $\I$;
\item[(b)] $\uu$ is a $T^\I$-sequence and  $(G,T_{{\bf u}}^{{\cal I}})$ is discrete.
\end{enumerate}
\end{corollary}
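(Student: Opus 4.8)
The plan is to verify both implications by directly unwinding the definitions, using the remark immediately preceding the statement that identifies the sequences trivial mod $\I$ with those that $\I$-converge to $0$ in the discrete topology on $G$.

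For (a) $\Rightarrow$ (b), I would start from the observation that if $\uu$ is trivial mod $\I$, then $\uu$ $\I$-converges to $0$ in the discrete topology $\delta$ on $G$. Since $\delta$ is a Hausdorff group topology, this already shows that $\uu$ is a $T^\I$-sequence. To identify $T^\I_\uu$, I would invoke that $T^\I_\uu$ is by definition the finest Hausdorff group topology on $G$ in which $\uu$ $\I$-converges to $0$. As $\delta$ is one such topology, $\delta\subseteq T^\I_\uu$; but $\delta$ is also the finest group topology on $G$, so $T^\I_\uu\subseteq\delta$. Combining these inclusions gives $T^\I_\uu=\delta$, i.e.\ $(G,T^\I_\uu)$ is discrete.

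For the converse (b) $\Rightarrow$ (a), I would use that $\uu$ $\I$-converges to $0$ in $T^\I_\uu$ by the very definition of $T^\I_\uu$. If $(G,T^\I_\uu)$ is discrete, then $\{0\}\in\VV_{T^\I_\uu}(0)$, and applying the definition of $\I$-convergence to this particular neighborhood yields $\{n\in\N_0:\ u_n\neq 0\}=\{n\in\N_0:\ u_n\notin\{0\}\}\in\I$, which is precisely the assertion that $\uu$ is trivial mod $\I$ (the discrepancy between indexing over $\N$ and over $\N_0$ being immaterial, since $\I$ is free).

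I do not expect a genuine obstacle here: the corollary is essentially a reformulation of the definitions. The only point that requires a little care is the finest-topology argument in (a) $\Rightarrow$ (b), where one must combine the maximality of $T^\I_\uu$ among the relevant Hausdorff topologies with the maximality of the discrete topology among all group topologies in order to pin down $T^\I_\uu=\delta$ exactly, rather than merely concluding an inclusion.
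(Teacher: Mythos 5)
Your proof is correct and takes essentially the same route as the paper's: the paper's two-line proof is precisely your argument in compressed form, namely that triviality mod $\I$ is equivalent to $\I$-convergence of $\uu$ to $0$ in the discrete topology, which in turn is equivalent to (b) by maximality of $T^\I_\uu$. Your explicit sandwich $\delta\subseteq T^\I_\uu\subseteq\delta$ and the neighborhood $\{0\}$ in the converse are exactly the details the paper leaves implicit.
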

\begin{proof}
The sequence $\uu$ is trivial {\rm mod} $\I$ if and only if $\uu$ $\I$-converges to $0$ in the discrete topology.
This is equivalent to $\uu$ is a $T^\I$-sequence and $T_{\uu}^{\I}$ is discrete.%
\end{proof}

This corollary suggests to consider mainly $T^\I$-sequences of non-zero elements.

\begin{corollary} \label{nhb_aP}
Let  ${\cal I}$ be a proper free $P$-ideal on $\N_0$ and $\uu$ a $T^\I$-sequence in an abelian group $G$.  Then there exists $I\in\I$ such that a neighborhood base at $0$ in $T_\uu^\I$ is given by the  family of sets 
$(\uu_{(I\cup F_n)})$ where $(F_n)$ is an increasing sequence of finite subsets of $\N$.
\end{corollary}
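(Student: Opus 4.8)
The plan is to identify $T^{\cal I}_{\uu}$, up to passing to a cofinal subsequence, with the Protasov--Zelenyuk topology of a genuine $T$-sequence, and to read off the proposed base from the classical one. The single place where being a $P$-ideal is essential is a pseudo-union step at the very end.

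First I would produce the set $I$. By Proposition~\ref{MetStat*}(a) there is a metrizable, hence first countable, group topology $\tau$ on $G$ with $u_n\stackrel{\cal I}{\longrightarrow}0$ in $(G,\tau)$. Since $\I$ is a $P$-ideal and $\tau$ is first countable, Fact~\ref{Remark}(b) applied to $(G,\tau)$ yields $I\in\I$ such that the subsequence $\vv=\uu\restriction_{\N_0\setminus I}$ \emph{converges} to $0$ in $\tau$. In particular $\vv$ is a $T$-sequence, so $T_{\vv}$ exists and, by Proposition~\ref{221}, $T_{\vv}\subseteq T^{\cal I}_{\uu}$. This is the $I$ I would use.

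Next I would match the two bases. On the one hand, for every increasing sequence $(F_n)$ of finite subsets of $\N$ the sets $I\cup F_n$ form an increasing sequence in $\I$ (freeness of $\I$ gives $F_n\in\I$), so each $\uu_{(I\cup F_n)}$ is a basic neighbourhood of $0$ in $T^{\cal I}_{\uu}$ by Proposition~\ref{explicit}. On the other hand, writing $\N_0\setminus I=\{n_0<n_1<\cdots\}$, the tail $\vv_m$ of the sequence $\vv$ is exactly $\uu_{I\cup\{n_0,\dots,n_{m-1}\}}$; hence the Protasov--Zelenyuk basic neighbourhood $\vv_{(m_k)}=\bigcup_{N}(\vv_{m_1}+\dots+\vv_{m_N})$ coincides with $\uu_{(I\cup F_k)}$ for $F_k=\{n_0,\dots,n_{m_k-1}\}$. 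Thus the proposed family contains a neighbourhood base of $T_{\vv}$ and consists of $T^{\cal I}_{\uu}$-neighbourhoods, and conversely every $\uu_{(I\cup F_n)}$ contains such a $\vv_{(m_k)}$ (take $m_k$ large enough that $F_k\setminus I\subseteq\{n_0,\dots,n_{m_k-1}\}$). So it remains only to prove that the family is cofinal in the neighbourhood filter of $T^{\cal I}_{\uu}$, i.e. that $T^{\cal I}_{\uu}\subseteq T_{\vv}$.

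This last inclusion is the crux. Given a neighbourhood $U$, Proposition~\ref{explicit} supplies $(J_n)\in\I^{\N}_{\uparrow}$ with $\uu_{(J_n)}\subseteq U$, and I want an increasing sequence $(F_n)$ of finite sets with $\uu_{(I\cup F_n)}\subseteq\uu_{(J_n)}$. It suffices to have $J_n\subseteq I\cup F_n$ termwise, i.e. that each $J_n\setminus I$ be finite, for then one sets $F_n=J_n\setminus I$ (increasing and finite), giving $\uu_{I\cup F_n}\subseteq\uu_{J_n}$ and hence $\uu_{(I\cup F_n)}\subseteq\uu_{(J_n)}\subseteq U$. Reducing an arbitrary increasing sequence in $\I$ to one absorbed modulo finite sets by the \emph{single} fixed $I$ is precisely the pseudo-union step, and I expect it to be the main obstacle: the defining property of a $P$-ideal only handles one prescribed countable family at a time, whereas here $(J_n)$ ranges over the uncountably many increasing sequences describing the full neighbourhood filter. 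The key point to carry out, therefore, is that the $I$ extracted above from the first countable topology $\tau$ can be taken to dominate, modulo finite sets, every such $(J_n)$ at once; it is exactly the interplay between first countability of $\tau$ and the $P$-ideal property that is meant to bridge this gap, and making that argument airtight is where the proof must concentrate.
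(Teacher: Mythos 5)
Your proposal does not prove the statement, and the step you yourself flag as ``the crux'' is not a technical detail to be tightened but an unbridgeable gap. You read the corollary as asserting one fixed $I\in\I$ that works for \emph{all} neighborhoods, and your plan requires that this single $I$ dominate, modulo finite sets, every $(J_n)\in\I^\N_\uparrow$. Taking constant sequences $J_n=J$, this forces $J\setminus I$ to be finite for \emph{every} $J\in\I$, i.e.\ $\I$ would be generated by $I$ together with $\FF in$ --- false for any $P$-ideal not countably generated modulo finite sets, e.g.\ $\I_d$. The $P$-ideal property only supplies a pseudo-union for one countable family at a time, and the $I$ you extract via Proposition~\ref{MetStat*}(a) and Fact~\ref{Remark}(b) controls only the auxiliary metrizable topology $\tau$, which is strictly coarser than $T^\I_\uu$; since $T^\I_\uu$ is not first countable (cf.\ Remark~\ref{hemicpt} for $\I=\FF in$), Fact~\ref{Remark}(b) cannot be applied a second time at the level of $T^\I_\uu$. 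Worse, your own (correct) base-matching computation shows the fixed-$I$ claim is \emph{equivalent} to $T^\I_\uu=T_{(u_n)_{n\in\N_0\setminus I}}$; by Theorem~\ref{cg} and Corollary~\ref{cor1_cg} this would make the union $\bigcup_{J\in\I}s_{(u_n)_{n\in\N_0\setminus J}}(G^\wedge)$ collapse to a single term and would settle Question~\ref{Ques1}(b) affirmatively for all $P$-ideals --- which the paper leaves open, and which fails for $\I=\I_d$ (statistically characterized subgroups are in general not characterized by any single subsequence; cf.~\cite{DRAH}). So the fixed-$I$ statement you set out to prove is actually false in general.

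The paper's proof arranges the quantifiers the other way: $I$ is chosen \emph{per neighborhood}. Given a basic neighborhood $\uu_{(I_n)}$ furnished by Proposition~\ref{explicit}, one applies the $P$-ideal property to the countable sequence $(I_n)$ itself, obtaining $I\in\I$ and an increasing sequence $(F_n)$ of finite sets with $I_n\subseteq I\cup F_n$; then $\uu_{(I\cup F_n)}\subseteq\uu_{(I_n)}$ by the definition of these sets, and $(I\cup F_n)\in\I^\N_\uparrow$, so the sets $\uu_{(I\cup F_n)}$ --- with $I$ ranging over $\I$ --- form a neighborhood base at $0$. That is the whole argument: no metrizable topology, no comparison with $T_{(u_n)_{n\in\N_0\setminus I}}$. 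The corollary's wording (``there exists $I\in\I$ such that\dots'') admittedly invites your reading, but the paper's own proof only establishes the variable-$I$ version; your preliminary observations (each $\uu_{(I\cup F_n)}$ is a $T^\I_\uu$-neighborhood, and for fixed $I$ the family reproduces the Protasov--Zelenyuk base of $T_{(u_n)_{n\in\N_0\setminus I}}$) are sound and in fact are exactly what exposes the literal fixed-$I$ statement as too strong.
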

\begin{proof}
Let $(I_n)\in\I^\N_\uparrow$. Since ${\cal I}$ is a $P$-ideal, there exist $I\in {\cal I}$ and an increasing sequence $(F_n)$ of finite subsets of $\N_0$ (so all $F_n$ belong to ${\cal I}$) such that $I_n\subseteq I\cup F_n$. It follows directly from the definition of $\uu_{(I_n)}$ that
$\uu_{(I_n)}\supseteq \uu_{(I\cup F_n)}$ holds. Since $(I\cup F_n)\in {\cal I}^\N_\uparrow$, the assertion follows.
\end{proof}

\section{The connection of $T^\I$-sequences with their $T$-subsequences}\label{sub}

In this section, we shed light on the topology $T^\I_{\uu}$ of a $T^\I$-sequence $\uu$ by relating the topology $T^\I_{\uu}$ to topologies  $T_{\vv}$ for suitable subsequences $\vv$ of $\uu$, which are $T$-sequences. 

\begin{deff} \label{u_T}
For a sequence $\uu$ in an abelian group $G$, define the families
\begin{align*}
 \uu_T&=\{J\subseteq \N: \ \N\setminus J\ \mbox{is infinite and}\ (u_n)_{n\in \N \setminus J}\ \mbox{is a }T\mbox{-sequence}\},\\
 \uu_T'&=\{J\subseteq \N: \ J\ \mbox{is infinite and} \ (u_n)_{n\in J}\ \mbox{is a }T\mbox{-sequence}\},\\
 \uu_{TB}&=\{J\subseteq \N: \ \N\setminus J\ \mbox{is infinite and}\ (u_n)_{n\in \N \setminus J}\ \mbox{is a }TB\mbox{-sequence}\},\\
 \uu_{TB}'&=\{J\subseteq \N: \ J\ \mbox{is infinite and} \ (u_n)_{n\in J}\ \mbox{is a }TB\mbox{-sequence}\}.
\end{align*}
\end{deff}

Clearly, $\uu_T'$ (resp., $\uu_{TB}'$) is a subfamily of $[\N]^\omega$ (the family of all infinite subsets of $\N$),  that is stable under taking infinite subsets, while $\uu_T$ (resp., $\uu_{TB}$) consists of non-cofinite sets in $\N$,  and is stable under taking non-cofinite supersets.

 The families $\uu_T'$  and $\uu_{TB}'$  are not stable under finite unions: let $\uu=(u_n)$ be the sequence $u_{2k}=k!$ and $u_{2k+1}=k!+1$; then both $(u_{2n})_{n\in  \N}$ and $(u_{2n+1})_{n\in \N}$ are $TB$-sequences, but $(u_n)$ is not a $T$-sequence, since $(u_{2k+1}-u_{2k})=(1)$ does not converge to $0$.

Since there is an obvious bijection between $\uu_T$ and $\uu_T'$, they are simultaneously non-empty. Obviously, $\uu_T\ne \emptyset$ precisely when $\uu$ has a $T$-subsequence. Clearly,  $\uu_T = \emptyset = \uu_T'$  when the sets $ \{u_n:\ n\in\N\}$ and  $\{n\in\N:\ u_n=0\}$ are finite.

On the contrary, Theorem~\ref{classITth} implies that if the abelian group $G$ is either almost torsion free or of prime exponent, then every one-to-one sequence in $G$ (no matter if $\I$-convergent or not) has a subsequence which is a $TB$-sequence. Hence, for this class of abelian groups, the families $\uu_T$, $\uu'_T$, $\uu_{TB}$ and $\uu'_{TB}$ are always non-empty.

\begin{lemma}\label{ssc_Ic}
Let $\uu$ be a sequence in an abelian group $G$.
\begin{enumerate}
 \item[(a)]   If $\tau$ is a Hausdorff group topology on $G$ and $(u_n)_{n\in\N_0\setminus I}$ is  $\tau$-convergent for some $I\in {\cal I}$, then $\uu$ is a $T^\I$-sequence and the identity $\iota_I\colon(G,T^{{\cal I}}_{{\bf u}})\to (G,\tau)$ is continuous.  In particular, $T_{\uu}^{\I}\supseteq T_{(u_n)_{n\notin I}}$.
 \item[(b)]   Let $\I$ be a $P$-ideal. Then $\uu$ is  a $T^\I$-sequence if and only if there exists $I\in\I$ such that $(u_n)_{n\in\N_0\setminus I}$   is a $T$-sequence. In this situation $\iota_I\colon (G,T^{{\cal I}}_{{\bf u}})\to (G,T_{(u_n)_{n\in\N_0\setminus I}})$ is continuous.
\end{enumerate}
\end{lemma}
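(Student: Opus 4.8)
The plan is to reduce both parts to the maximality description of $T^\I_\uu$ from Remark~\ref{maximality}, combined with the two transfer facts of Fact~\ref{Remark}, which relate ordinary convergence of the subsequence indexed by $\N_0\setminus I$ to $\I$-convergence of the whole sequence.

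For part (a), I would argue as follows. Assuming $(u_n)_{n\in\N_0\setminus I}$ converges to $0$ in $\tau$, Fact~\ref{Remark}(a) immediately gives that the whole sequence $\uu$ $\I$-converges to $0$ in $(G,\tau)$; since $\tau$ is a Hausdorff group topology, this witnesses that $\uu$ is a $T^\I$-sequence. The continuity of $\iota_I$ is then just maximality: by Remark~\ref{maximality}, $T^\I_\uu$ is the supremum of all (even non-Hausdorff) group topologies in which $\uu$ $\I$-converges to $0$, and $\tau$ is one of them, so $\tau\subseteq T^\I_\uu$, i.e.\ $\iota_I\colon(G,T^\I_\uu)\to(G,\tau)$ is continuous. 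For the final clause, note that convergence of $(u_n)_{n\in\N_0\setminus I}$ to $0$ in the Hausdorff topology $\tau$ already means $(u_n)_{n\in\N_0\setminus I}$ is a $T$-sequence, so $T_{(u_n)_{n\notin I}}$ is defined; re-running the continuity argument with $\tau$ replaced by $T_{(u_n)_{n\notin I}}$ (in which $(u_n)_{n\notin I}$ converges to $0$ by definition) yields $T^\I_\uu\supseteq T_{(u_n)_{n\notin I}}$.

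For part (b), the implication from right to left is immediate from part (a): if $(u_n)_{n\in\N_0\setminus I}$ is a $T$-sequence for some $I\in\I$, apply (a) with $\tau=T_{(u_n)_{n\notin I}}$, which is Hausdorff and makes $(u_n)_{n\notin I}$ converge to $0$; this delivers simultaneously that $\uu$ is a $T^\I$-sequence and that $\iota_I\colon(G,T^\I_\uu)\to(G,T_{(u_n)_{n\notin I}})$ is continuous. For the converse, suppose $\uu$ is a $T^\I$-sequence. I would first pass, via Proposition~\ref{MetStat*}(a), to a metrizable---hence first countable and Hausdorff---group topology $\tau$ on $G$ in which $u_n\stackrel{\I}{\longrightarrow}0$. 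Now all hypotheses of Fact~\ref{Remark}(b) are in place ($\I$ a $P$-ideal, $(G,\tau)$ first countable, and $\uu$ being $\I$-convergent to $0$), so there is $I\in\I$ with $(u_n)_{n\in\N_0\setminus I}$ convergent to $0$ in $\tau$; since $\tau$ is Hausdorff, $(u_n)_{n\in\N_0\setminus I}$ is a $T$-sequence, as required.

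The step I expect to be the crux is the converse in (b). Fact~\ref{Remark}(b) needs first countability, yet $T^\I_\uu$ is in general very far from being first countable (already in the classical case its analogue $T_\uu$ is a non-metrizable hemicompact $k$-space by Remark~\ref{hemicpt}), so one cannot extract the genuinely convergent subsequence while working inside $(G,T^\I_\uu)$ itself. The essential manoeuvre is therefore to descend first to a coarser metrizable model supplied by Proposition~\ref{MetStat*}(a), and only there invoke the $P$-ideal extraction; the remainder is routine bookkeeping with the maximality of $T^\I_\uu$.
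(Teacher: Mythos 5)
Your proposal is correct and follows essentially the same route as the paper's own proof: part (a) via Fact~\ref{Remark}(a) together with the maximality of $T^{\I}_{\uu}$ from Remark~\ref{maximality}, with the final clause obtained by taking $\tau=T_{(u_n)_{n\notin I}}$, and the converse in (b) by descending to a coarser metrizable group topology supplied by Proposition~\ref{MetStat*}(a) and only then applying the $P$-ideal extraction of Fact~\ref{Remark}(b). Your closing observation about why one cannot work inside $(G,T^{\I}_{\uu})$ directly (lack of first countability) is precisely the point that makes the paper's detour through the metrizable topology necessary.
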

\begin{proof}
(a) The first assertion follows from Fact~\ref{Remark}(a) and the maximality of $T^\I_{{\bf u}}$, the second statement holds, since we may take $\tau=T_{(u_n)_{n\notin I}}$.

(b)  Assume now that $\uu$ is a  $T^\I$-sequence   and that ${\cal I}$ is a $P$-ideal. By Lemma
\ref{MetStat*}, there exists a metrizable group topology $\tau$ on $G$ coarser than $T^{{\cal I}}_{{\bf u}}$.  By Fact~\ref{Remark}(b), there exists $I\in \I$ such that $(u_n)_{n\notin I}$ converges to $0$ in $(G,\tau)$. Since $\tau$ is a Hausdorff group topology,
we conclude that $(u_n)_{n\notin I}$ is a $T$-sequence. The other implication as well as the additional statement follow  from (a)  applied to $T_{(u_n)_{n\notin I}}$.
\end{proof}

As a consequence of item (b) we obtain

\begin{example}\label{MetStat}
A sequence $\uu$ in a topological abelian group $(G,\tau)$ is statistically convergent to $0$ (i.e., a $T^{\I_d}$-sequence) if and only if there exists  $I\in \I_d$ such that $(u_n)_{n\in\N_0\setminus I}$ is convergent.
\end{example}

\begin{notation}
For a sequence ${\bf u}$ in an abelian group $G$ and a proper free ideal ${\cal I}$ on $\N_0$ we define
$${\cal I}^\uu_T=\{I\in {\cal I}:\ (u_n)_{n\in\N_0\setminus I}\ \mbox{is a }T\mbox{-sequence}\} = \I \cap \uu_T,$$
and analogously
$${\cal I}^\uu_{TB}=\{I\in {\cal I}:\ (u_n)_{n\in\N_0\setminus I}\ \mbox{is a }TB\mbox{-sequence}\} = \I \cap \uu_{TB}.$$
\end{notation}

\begin{proposition}
Let $\I$ be a proper free $P$-ideal on $\N_0$, let $G$ be an abelian group and $\uu$ a sequence in $G$.
Then $\uu$ is a $T^\I$-sequence if and only if $\I^\uu_T\ne \emptyset$.
\end{proposition}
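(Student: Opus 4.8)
The plan is to recognize the statement as a direct reformulation of Lemma~\ref{ssc_Ic}(b). The key step is simply to unfold the definition of the family $\I^\uu_T$. By its very definition, the condition $\I^\uu_T \neq \emptyset$ means precisely that there exists some $I \in \I$ for which the subsequence $(u_n)_{n \in \N_0 \setminus I}$ is a $T$-sequence. This is verbatim the right-hand condition appearing in the equivalence of Lemma~\ref{ssc_Ic}(b).

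Then I would invoke that lemma. Since the hypothesis of the proposition---that $\I$ is a proper free $P$-ideal---is stronger than the hypothesis of Lemma~\ref{ssc_Ic}(b), namely that $\I$ is a $P$-ideal, the lemma applies and yields that $\uu$ is a $T^\I$-sequence if and only if such an $I \in \I$ exists. Chaining this with the previous reformulation gives exactly the asserted equivalence.

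There is essentially no obstacle here: the content of the proposition is already contained in Lemma~\ref{ssc_Ic}(b), and the new notation $\I^\uu_T$ merely repackages the existential clause ``there exists $I\in\I$ such that $(u_n)_{n\in\N_0\setminus I}$ is a $T$-sequence'' as the nonemptiness of a set. The only point worth checking is that the proposition's hypotheses subsume those needed for Lemma~\ref{ssc_Ic}(b) (and, through its proof, for Fact~\ref{Remark}(b) and Proposition~\ref{MetStat*}), which is immediate since a proper free $P$-ideal is in particular a $P$-ideal.
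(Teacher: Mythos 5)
Your proposal is correct and follows essentially the same route as the paper: the paper's own proof likewise deduces the statement from Lemma~\ref{ssc_Ic} (using part (a) for the direction $\I^\uu_T\ne\emptyset\Rightarrow\uu$ is a $T^\I$-sequence, and part (b) to invert it under the $P$-ideal hypothesis), with the nonemptiness of $\I^\uu_T$ unfolding, exactly as you say, into the existential clause of that lemma. Your check that the proper free $P$-ideal hypothesis covers what is needed through Fact~\ref{Remark}(b) and Proposition~\ref{MetStat*} is a sensible extra precaution and matches the paper's standing assumptions.
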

\begin{proof}
By Lemma~\ref{ssc_Ic}, if $\I^\uu_T\ne \emptyset$, then $\uu$ is a $T^\I$-sequence.
By item (b) of the same lemma, if $\I$ is a $P$-ideal, then this implication can be inverted.
\end{proof}

  The next proposition characterizes $T$-sequences $\uu$ in terms of ${\cal I}^\uu_T$ being an ideal.

\begin{proposition}
Let $\I$ be a proper free ideal
 on $\N_0$ and $\uu$ a sequence in an abelian group $G$.
Then the following conditions are equivalent:
\begin{itemize}
\item[(a)]  ${\cal I}^\uu_T$ is an ideal;
\item[(b)] ${\cal I}^\uu_T = \I$;
\item[(c)] $\uu$ is a $T$-sequence.
\end{itemize}
\end{proposition}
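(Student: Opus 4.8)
The plan is to run the cycle of implications (c) $\Rightarrow$ (b) $\Rightarrow$ (a) $\Rightarrow$ (c), where (b) $\Rightarrow$ (a) is immediate (if $\I^\uu_T=\I$, then $\I^\uu_T$ is literally the ideal $\I$), so that all the content sits in the outer two implications. The single recurring tool is the monotonicity of the correspondence $\uu\mapsto T_\uu$ recorded earlier: a subsequence of a $T$-sequence is again a $T$-sequence. I would also record once, at the start, the elementary remark that any $I\in\I$ is non-cofinite: since $\I$ is free and proper, if $\N_0\setminus I$ were finite then $\N_0\setminus I\in\FF in\subseteq\I$ would force $\N_0=I\cup(\N_0\setminus I)\in\I$, contradicting properness. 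Hence for every $I\in\I$ the index set $\N_0\setminus I$ is infinite and $(u_n)_{n\in\N_0\setminus I}$ is a genuine subsequence of $\uu$.

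For (c) $\Rightarrow$ (b), the inclusion $\I^\uu_T\subseteq\I$ is definitional, so I only need $\I\subseteq\I^\uu_T$. Fixing $I\in\I$, the remark gives that $(u_n)_{n\in\N_0\setminus I}$ is an infinite subsequence of the $T$-sequence $\uu$, hence a $T$-sequence by monotonicity; thus $I\in\I^\uu_T$, and $\I^\uu_T=\I$. For (a) $\Rightarrow$ (c), the point I would isolate first is that closure of $\I^\uu_T$ under finite unions is \emph{automatic} and therefore carries no information: if $I,J\in\I^\uu_T$, then $I\cup J\in\I$, it is non-cofinite, and $(u_n)_{n\in\N_0\setminus(I\cup J)}$ is a subsequence of the $T$-sequence $(u_n)_{n\in\N_0\setminus I}$, hence a $T$-sequence again by monotonicity, so $I\cup J\in\I^\uu_T$ with no hypothesis needed. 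Consequently the entire force of (a) is closure under subsets. Being an ideal, $\I^\uu_T$ is nonempty and downward closed, so it contains $\emptyset$ (any member has $\emptyset$ as a subset); but by definition $\emptyset\in\I^\uu_T$ says exactly that $(u_n)_{n\in\N_0\setminus\emptyset}=\uu$ is a $T$-sequence, which is (c).

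I expect the only delicate point to be bookkeeping around $\emptyset$ and the convention for ideals, since the whole argument hinges on the equivalence ``$\emptyset\in\I^\uu_T$ iff $\uu$ is a $T$-sequence.'' I would make explicit that an ideal is understood to contain $\emptyset$ (equivalently, is nonempty and closed under subsets); with that convention stated, the degenerate possibility $\I^\uu_T=\emptyset$ is not an ideal, so (a) already fails there, consistently with (c) failing, and the cycle closes cleanly.
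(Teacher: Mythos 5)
Your proof is correct and follows essentially the same route as the paper's: the trivial implication (b)~$\Rightarrow$~(a), monotonicity of $T$-sequences under passage to subsequences for (c)~$\Rightarrow$~(b), and the observation that an ideal must contain $\emptyset$, which by definition of $\I^\uu_T$ means $\uu$ itself is a $T$-sequence, for (a)~$\Rightarrow$~(c). Your added checks (that freeness plus properness make every $I\in\I$ non-cofinite, and that closure of $\I^\uu_T$ under finite unions is automatic) are sound refinements the paper leaves implicit, but they do not change the argument.
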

\begin{proof}
(a)$\Rightarrow$(c) If $\I_T^{\uu}$ is an ideal, then $\emptyset \in \I^{\uu}_T$ and hence $\uu$ is a $T$-sequence.

(c)$\Rightarrow$(b) If $\uu$ is $T$-sequence, clearly $\I=\I_T^{\uu}$, since every subsequence of $\uu$ is a $T$-sequence as well.

(b)$\Rightarrow$(a) is obvious.
\end{proof}

  Next we see that when ${\cal I}_T^\uu$ is not empty, then ${\bf u}$ is a $T^\I$-sequence. The assumption that $\uu$ generates $G$ is not restrictive in view of Lemma~\ref{gen}.

\begin{proposition}\label{221}
Let ${\cal I}$ be a proper free ideal on $\N_0$, let $G$ be an abelian group and ${\bf u}$ be a sequence which generates $G$ and such that ${\cal I}_T^\uu$ is not empty.
Then ${\bf u}$ is a $T^\I$-sequence and for every $I\in {\cal I}^\uu_T$ the identity mapping $$\iota_I\colon(G,T^{{\cal I}}_{{\bf u}} )\to (G,T_{(u_n)_{n\in\N_0\setminus I}})$$ is continuous.


For a compact subset $C$ in $(G,T^{{\cal I}}_{{\bf u}} )$,
 $$C\subseteq \bigcap_{I\in \I_T}n_I\odot \{u_n:n\in\N_0\setminus I\}.$$
\end{proposition}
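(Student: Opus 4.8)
The plan is to treat the three assertions separately. The first two---that $\uu$ is a $T^\I$-sequence and that each $\iota_I$ is continuous---follow directly from Lemma~\ref{ssc_Ic}(a): since $\I^\uu_T\neq\emptyset$, pick any $I\in\I^\uu_T$, so that $\vv_I:=(u_n)_{n\in\N_0\setminus I}$ is a $T$-sequence and hence converges to $0$ in the Hausdorff group topology $T_{\vv_I}$. Applying Lemma~\ref{ssc_Ic}(a) with $\tau=T_{\vv_I}$ then gives at once that $\uu$ is a $T^\I$-sequence and that $\iota_I\colon(G,T^\I_\uu)\to(G,T_{\vv_I})$ is continuous, and this works for every $I\in\I^\uu_T$.

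For the compact-set inclusion I would argue one $I\in\I^\uu_T$ at a time and intersect at the end. Fixing such an $I$, continuity of $\iota_I$ makes $C$ compact in $(G,T_{\vv_I})$, since a continuous image of a compact set is compact and $\iota_I$ is the identity on the underlying set. I would then invoke the hemicompact $k$-space structure of Remark~\ref{hemicpt}---a compact subset of a $T$-sequence topology is absorbed by some $m\odot\vv_I$---to produce $n_I\in\N$ with $C\subseteq n_I\odot\{u_n:n\in\N_0\setminus I\}$, and intersecting over all $I\in\I^\uu_T$ would finish the argument.

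The step I expect to be the main obstacle is that Remark~\ref{hemicpt} is stated for a $T$-sequence that \emph{generates} the ambient group, whereas $\vv_I$ generates only $H_I:=\langle u_n:n\in\N_0\setminus I\rangle$, which can be a proper subgroup of $G$ even though $\uu$ generates $G$. To localize to $H_I$ I would first check that $H_I$ is open in $(G,T^\I_\uu)$: the set $J:=\{n\in\N_0:u_n\notin H_I\}$ is contained in $I$ and hence lies in $\I$, so the constant (non-decreasing) sequence $J_n\equiv J$ belongs to $\I^\N_\uparrow$ and the basic neighborhood $\uu_{(J_n)}$ furnished by Proposition~\ref{explicit} is contained in the subgroup $H_I$. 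Thus $H_I$ is clopen, a compact $C$ meets only finitely many of its cosets, and $C\cap H_I$---being closed in $C$---is compact; Remark~\ref{hemicpt} applied inside $(H_I,T_{\vv_I})$ then yields $C\cap H_I\subseteq n_I\odot\{u_n:n\in\N_0\setminus I\}$. Since $n_I\odot\{u_n:n\in\N_0\setminus I\}$ is itself contained in $H_I$, this captures exactly the part of $C$ lying in $H_I$; the displayed inclusion holds verbatim precisely when $C\subseteq H_I$ (in particular whenever $\vv_I$ generates $G$), and in general the safe conclusion I would record is the coset-localized form $C\cap H_I\subseteq n_I\odot\{u_n:n\in\N_0\setminus I\}$ for each $I\in\I^\uu_T$.
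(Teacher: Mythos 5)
Your argument follows the paper's own proof almost line for line in its first half: the paper likewise derives both the $T^\I$-property and the continuity of every $\iota_I$ from Lemma~\ref{ssc_Ic}, then pushes a compact set $C$ forward along $\iota_I$ and absorbs $\iota_I(C)$ into $n_I\odot \{u_n:n\in\N_0\setminus I\}$ by Remark~\ref{hemicpt}.

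The difference is the generation issue you raise in your last paragraph, and there you are right and the paper is not. The paper's proof applies Remark~\ref{hemicpt} to $(u_n)_{n\in\N_0\setminus I}$ inside $G$ without verifying the hypothesis, stated explicitly in that remark, that the sequence generates the ambient group; deleting the indices in $I$ can shrink the generated subgroup, and then the displayed inclusion genuinely fails. For instance, let $G=\Z$, $u_0=1$, $u_n=2^n$ for $n\ge 1$, and let $\I$ be any proper free ideal (even $\I=\FF in$). Then $\uu$ generates $\Z$, $I=\{0\}\in\I^\uu_T$ because $(2^n)_{n\ge 1}$ is a $T$-sequence, and $C=\{1\}$ is compact in $(G,T^{\I}_{\uu})$; yet $n_I\odot\{2^n:n\ge 1\}\subseteq 2\Z$ for every $n_I\in\N$, so $C$ is not contained in the stated intersection. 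Hence the correct conclusion is exactly your coset-localized one, $C\cap H_I\subseteq n_I\odot\{u_n:n\in\N_0\setminus I\}$, unless one adds the hypothesis that $(u_n)_{n\in\N_0\setminus I}$ generates $G$ for each $I\in\I^\uu_T$. Your repair is sound, with one small economy available: you do not need Proposition~\ref{explicit} to get openness of $H_I$ in $T^\I_\uu$. By the device in the proof of Lemma~\ref{gen}, $H_I$ is already open in $(G,T_{(u_n)_{n\notin I}})$, and that topology restricts on $H_I$ to $T_{(u_n)_{n\notin I}}(H_I)$; openness in the finer topology $T^\I_\uu$ then comes for free by pulling back along the continuous $\iota_I$, and Remark~\ref{hemicpt}, applied in $(H_I,T_{(u_n)_{n\notin I}}(H_I))$ to the compact set $\iota_I(C)\cap H_I$, yields your inclusion. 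The same correction should be kept in mind when reading the question the paper poses immediately after this proposition.
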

\begin{proof}
According to Lemma~\ref{ssc_Ic} the mapping $\iota_I$ is continuous for every $I\in {\cal I}_T$. So also the
 diagonal mapping is continuous. Fix a compact subset $C$ in $( G,T^{{\cal I}}_{{\bf u}} )$. Then $\iota_I(C)$ is a compact subset of $(G,T_{(u_n)_{n\in\N_0\setminus I}})$. According to Remark~\ref{hemicpt}, there exists $n_I\in\N$ such that
$\iota_I(C)\subseteq n_I\odot \{u_n:\ n\in \N_0\setminus I\}.$
\end{proof}

\begin{question}
Is it true that for any $T^\I$-sequence $\uu$ which spans an abelian group $G$ and such that $\I_T^\uu\not=\emptyset$, a subset $C$ of $G$ is compact if and only if there exists a family $(n_I)_{I\in\I_T}$ such that $C\subseteq \bigcap_{I\in \I_t}n_I\odot \{u_n: n\in\N_0\setminus I\}$?
\end{question}

Let $\uu$ be a $T$-sequence. 
The correspondence $\I \mapsto T^{{\cal I}}_{{\bf u}} $ is monotone increasing.
In particular, $T_\uu = T^{{\cal F}in}_{{\bf u}}  \subseteq T^{{\cal I}}_{{\bf u}} $. Hence, the $T^{{\cal I}}_{{\bf u}} $-compact sets are also
$T_\uu$-compact.

\smallskip
More in general, we leave the following questions open.

\begin{question}\label{Ques1}
Let $\I$ be a proper free ideal on $\N_0$ and $\uu$ a sequence in an abelian group $G$.
\begin{enumerate}
\item[(a)] 
What are the compact subsets of $(G,T^{{\cal I}}_{{\bf u}})$?

\item[(b)] Assume that $\I_T^{\uu}\not=\emptyset$. We proved that $T^{{\cal I}}_{{\bf u}} \supseteq \sup_{I \in \I_T^{\uu}} T_{(u_n)_{n\in\N_0\setminus I}} $. When do these two topologies coincide?
\end{enumerate}

\end{question}


\section{$\I$-characterized subgroups 
and ideal $T$- and $TB$-sequences} \label{susec}

\subsection{The character group of $(G,T^{{\cal I}}_{{\bf u}})$ and $(G,T^{{\cal I}}_{{\bf pu}})$}\hfill

The following is one of the main achievements of this paper. It generalizes to the ideal convergence setting the important equality known in the $\FF in$ case.

\begin{theorem}\label{cg}
Let $\I$ be a proper free ideal on $\N_0$ and $\uu$ a $T$-sequence in the abelian group $G$.
Algebraically, 
$$(G,T^{{\cal I}}_{{\bf u}})^\wedge={s}_{{\bf u}}^{{\cal I}}(G^\wedge).$$
\end{theorem}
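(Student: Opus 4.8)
The plan is to establish the two inclusions separately, regarding both sides as subgroups of $\mathrm{Hom}(G,\T)$. Here each $u_n\in G$ is viewed as a character of $G^\wedge$ through the evaluation $\chi\mapsto\chi(u_n)$, so that by definition $s_\uu^\I(G^\wedge)=\{\chi\in G^\wedge:\ \chi(u_n)\stackrel{\I}{\longrightarrow}0\text{ in }\T\}$, while $(G,T^\I_\uu)^\wedge$ consists of those $\chi\in\mathrm{Hom}(G,\T)$ that are continuous with respect to $T^\I_\uu$.

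For the inclusion $(G,T^\I_\uu)^\wedge\subseteq s_\uu^\I(G^\wedge)$ I would take any $T^\I_\uu$-continuous character $\chi\colon(G,T^\I_\uu)\to\T$. Since $\uu$ $\I$-converges to $0$ in $T^\I_\uu$ by the very definition of this topology, Fact~\ref{I-cont} gives $\chi(u_n)\stackrel{\I}{\longrightarrow}\chi(0)=0$, so that $\chi\in s_\uu^\I(G^\wedge)$. This direction is immediate.

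For the reverse inclusion $s_\uu^\I(G^\wedge)\subseteq(G,T^\I_\uu)^\wedge$, fix $\chi\in G^\wedge$ with $\chi(u_n)\stackrel{\I}{\longrightarrow}0$. Consider the group topology $\sigma_\chi=\sigma(G,\langle\chi\rangle)$ induced by the single character $\chi$; a neighborhood base at $0$ is given by the sets $\chi^{-1}(V)$ as $V$ ranges over the neighborhoods of $0$ in $\T$. Then $u_n\stackrel{\I}{\longrightarrow}0$ in $(G,\sigma_\chi)$, precisely because $\{n:\ u_n\notin\chi^{-1}(V)\}=\{n:\ \chi(u_n)\notin V\}\in\I$ for each such $V$. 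The decisive step is to invoke Remark~\ref{maximality}: since $\uu$ is a $T^\I$-sequence (being a $T$-sequence and $\I$ free), $T^\I_\uu$ is the supremum of all group topologies on $G$ --- not necessarily Hausdorff --- in which $\uu$ $\I$-converges to $0$; in particular $\sigma_\chi\subseteq T^\I_\uu$. Consequently $\chi$, being $\sigma_\chi$-continuous, is continuous for the finer topology $T^\I_\uu$ as well, i.e. $\chi\in(G,T^\I_\uu)^\wedge$.

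The only subtle point is this last appeal to Remark~\ref{maximality}, which spares us from verifying that $\sigma_\chi$ is Hausdorff --- it need not be, since $\chi$ may fail to be injective. The supremum characterization of $T^\I_\uu$ over all (not necessarily Hausdorff) group topologies is exactly what makes the comparison $\sigma_\chi\subseteq T^\I_\uu$ legitimate. Everything else is a matter of unwinding the definitions of $\I$-convergence and of the weak topology generated by a single character.
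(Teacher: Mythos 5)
Your proposal is correct and follows essentially the same route as the paper's own proof: the easy inclusion via Fact~\ref{I-cont}, and the reverse inclusion by showing $u_n\stackrel{\I}{\longrightarrow}0$ in $\sigma(G,\langle\chi\rangle)$ and invoking the supremum characterization of $T^\I_\uu$ over not-necessarily-Hausdorff group topologies from Remark~\ref{maximality}. Your explicit remark that $\sigma_\chi$ need not be Hausdorff (and that this is exactly why Remark~\ref{maximality} is needed) is a point the paper uses implicitly, so nothing is missing.
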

\begin{proof}
Let $\chi\in (G,T^{{\cal I}}_{{\bf u}})^\wedge$. Then $\chi\in {\rm Hom}(G,\T)$ and $\chi(u_n)\stackrel{{\cal I}}{\longrightarrow}\chi(0)=0_\T$ by Fact~\ref{I-cont}.

Conversely, let $\chi\in {\rm Hom}(G,\T)$ such that $\chi(u_n)\stackrel{{\cal I}}{\longrightarrow}0$. Let $\sigma=\sigma(G,\langle\chi\rangle)$.
Then $u_n\stackrel{{\cal I}}{\longrightarrow}0$ in $(G,\sigma)$. Indeed, this condition is equivalent to:
for all $k\in\N$ the set $\{n\in\N:\ \chi(u_n)\notin \T_k\}\in {\cal I}$, but this means that $(\chi(u_n))$ ${\cal I}$-converges to $0$ in $\T$, which is true by hypothesis. Hence, 
$\sigma$ is coarser than $T^{{\cal I}}_{{\bf u}}$   by Remark~\ref{maximality}, which implies that $\chi\in (G,T^{{\cal I}}_{{\bf u}})^\wedge$.
\end{proof}

\begin{corollary}\label{cor1_cg}\label{cor2_cg}
Let $\I$ be a proper free ideal on $\N_0$ and let $\uu$ be a $T^\I$-sequence in an abelian group $G$. Then
\begin{equation}\label{*}
(G,T^{{\cal I}}_{{\bf u}} )^\wedge \supseteq\bigcup_{I\in {\cal I} }{s}_{(u_n)_{n\in\N_0\setminus I}}(G^\wedge).
\end{equation}
If $\I$ is a $P$-ideal, then \eqref{*} becomes an equality.
\end{corollary}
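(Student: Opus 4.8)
The plan is to reduce both inclusions to statements about the scalar sequences $(\chi(u_n))$ in $\T$, treating them separately: the inclusion \eqref{*} will hold for every proper free ideal, while the reverse inclusion will require the $P$-ideal hypothesis. First I would record the two identifications that make this reduction possible. On the right-hand side, via the canonical evaluation embedding $G\hookrightarrow (G^\wedge)^\wedge$, one has for each $I\in\I$
$$s_{(u_n)_{n\in\N_0\setminus I}}(G^\wedge)=\{\chi\in G^\wedge:\ \chi(u_n)\to 0\ \text{as}\ n\to\infty,\ n\notin I\},$$
and since $\I$ is proper and free the set $\N_0\setminus I$ is infinite, so these are genuine subsequences. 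On the left-hand side I would use that $(G,T^\I_\uu)^\wedge=s^{\I}_{\uu}(G^\wedge)=\{\chi\in G^\wedge:\ \chi(u_n)\stackrel{{\cal I}}{\longrightarrow}0\}$, which is exactly the content of the argument in the proof of Theorem~\ref{cg} (that argument only uses that $T^\I_\uu$ exists, not that $\uu$ be a $T$-sequence, so it applies verbatim to $T^\I$-sequences).

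For the inclusion \eqref{*}, I would fix $\chi$ in the right-hand side, so that $\chi(u_n)\to 0$ for $n\notin I$ for some $I\in\I$. Applying Fact~\ref{Remark}(a) to the sequence $(\chi(u_n))$ in $X=\T$, whose subsequence off $I$ converges to $0$, yields $\chi(u_n)\stackrel{{\cal I}}{\longrightarrow}0$, i.e. $\chi\in s^{\I}_{\uu}(G^\wedge)=(G,T^\I_\uu)^\wedge$. If one prefers to bypass the identification, the same conclusion follows directly by setting $\sigma=\sigma(G,\langle\chi\rangle)$ and observing, exactly as in the proof of Theorem~\ref{cg}, that $u_n\stackrel{{\cal I}}{\longrightarrow}0$ in $(G,\sigma)$; then $\sigma\subseteq T^\I_\uu$ by the maximality recorded in Remark~\ref{maximality}, whence $\chi$ is $T^\I_\uu$-continuous. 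Either way this step uses nothing beyond $\I$ being proper and free.

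For the reverse inclusion under the $P$-ideal hypothesis, I would fix $\chi\in(G,T^\I_\uu)^\wedge$. Since $u_n\stackrel{{\cal I}}{\longrightarrow}0$ in $(G,T^\I_\uu)$ and $\chi$ is continuous, Fact~\ref{I-cont} gives $\chi(u_n)\stackrel{{\cal I}}{\longrightarrow}0$ in $\T$. As $\T$ is metrizable, hence first countable, and $\I$ is a $P$-ideal, Fact~\ref{Remark}(b) furnishes $I\in\I$ with $\chi(u_n)\to 0$ for $n\notin I$, that is, $\chi\in s_{(u_n)_{n\in\N_0\setminus I}}(G^\wedge)$, placing $\chi$ in the union. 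I expect this last step to be the crux of the argument: it is precisely the passage from $\I$-convergence back to ordinary convergence off a single member of $\I$, which fails for general free ideals and is exactly what the $P$-property, together with the first countability of $\T$, secures. Everything else is routine bookkeeping with the definitions and with Facts~\ref{Remark} and~\ref{I-cont}.
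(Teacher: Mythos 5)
Your proposal is correct and follows essentially the same route as the paper: both directions are reduced to the scalar sequences $(\chi(u_n))$ in $\T$ via Theorem~\ref{cg}, with Fact~\ref{Remark}(a) (equivalently Lemma~\ref{ssc_Ic}, which the paper cites) giving the inclusion \eqref{*} and Fact~\ref{Remark}(b), using first countability of $\T$ and the $P$-ideal property, giving the reverse inclusion. Your explicit observation that the argument for Theorem~\ref{cg} uses only the existence of $T^{\I}_{\uu}$ and hence applies to $T^\I$-sequences is a welcome clarification of a point the paper leaves implicit, but it does not change the substance of the argument.
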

\begin{proof}
Fix $I\in {\cal I}$ and $\chi\in {s}_{(u_n)_{n\notin I}}(G^\wedge)$. This means that $(\chi(u_n))_{n\notin I}$ converges to $0_\T$.
According to Lemma~\ref{ssc_Ic},  $(\chi(u_n))_{n\in\N_0}$ is ${\cal I}$-convergent to $0_\T$.
Now the first assertion follows from Theorem~\ref{cg}.

To prove the second assertion, assume that $\I$ is also a $P$-ideal; it suffices to show that
$$\{\chi\in {\rm Hom}(G,\T):\chi(u_n)\stackrel{{\cal I}}{\longrightarrow}0_\T\}\subseteq \bigcup_{I\in {\cal I} }{s}_{(u_n)_{n\in\N_0\setminus I}}(G^\wedge).$$
Pick $\chi\in {\rm Hom}(G,\T)$ satisfying $\chi(u_n)\stackrel{{\cal I}}{\longrightarrow}0_\T$. Since $\T$ is first countable and ${\cal I}$ is a $P$-ideal, by Fact~\ref{Remark}(b) there exists $I\in {\cal I}$ such that $(\chi(u_n))_{n\notin I}$ is convergent to $0$. Hence, $\chi\in {s}_{(u_n)_{n\in\N_0\setminus I}}(G^\wedge)$.
\end{proof}


\begin{theorem}\label{tb}
Let $\I$ be a free ideal on $\N_0$, let $\uu$ be a sequence in an abelian group $G$, and let $K={\rm Hom}(G,\T)$ be the compact dual of $G$. Then the following holds:
\begin{enumerate}
\item[(a)]
$\sigma(G,s^{{\cal I}}_\uu(K))$ is the finest totally bounded group topology on $G$ in which $\uu$ ${\cal I}$-converges  to $0$.
\item[(b)] $\uu$ is a $TB^\I$-sequence if and only if $s^{{\cal I}}_\uu(K)$ is dense in $K$ (i.e., separates the points of $G$).
\end{enumerate}
 \end{theorem}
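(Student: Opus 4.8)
The plan is to establish (a) first and then read off (b) from it. I would begin by checking that $H:=s_\uu^\I(K)$ is a subgroup of $K$: if $\chi_1(u_n)\overset{\I}{\longrightarrow}0$ and $\chi_2(u_n)\overset{\I}{\longrightarrow}0$ in $\T$, then for every $k$ one has $\{n:(\chi_1\pm\chi_2)(u_n)\notin\T_{k}\}\subseteq\{n:\chi_1(u_n)\notin\T_{2k}\}\cup\{n:\chi_2(u_n)\notin\T_{2k}\}\in\I$, so $\chi_1\pm\chi_2\in H$. The topology $\sigma(G,H)$ is totally bounded by construction. Next I verify that $u_n\overset{\I}{\longrightarrow}0$ in $\sigma(G,H)$: a neighborhood base at $0$ consists of the finite intersections $U=\bigcap_{i=1}^m\chi_i^{-1}(\T_k)$ with $\chi_i\in H$ and $k\in\N$, and since $\{n:u_n\notin U\}=\bigcup_{i=1}^m\{n:\chi_i(u_n)\notin\T_k\}$ is a finite union of members of $\I$, it lies in $\I$.

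For the maximality asserted in (a), let $\tau$ be any totally bounded group topology on $G$ in which $u_n\overset{\I}{\longrightarrow}0$. By the Comfort--Ross correspondence $\tau=\sigma(G,(G,\tau)^\wedge)$, so it suffices to show $(G,\tau)^\wedge\subseteq H$. If $\chi\in(G,\tau)^\wedge$, then $\chi$ is continuous, so Fact~\ref{I-cont} gives $\chi(u_n)\overset{\I}{\longrightarrow}\chi(0)=0$, i.e. $\chi\in s_\uu^\I(K)=H$. Hence $\tau=\sigma(G,(G,\tau)^\wedge)\subseteq\sigma(G,H)$ by monotonicity of $\sigma(G,\cdot)$, which proves that $\sigma(G,H)$ is the finest such topology (its existence is also guaranteed by Remark~\ref{maximality}).

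To obtain (b) I combine (a) with the observation that a precompact topology is exactly a Hausdorff totally bounded one. If $\uu$ is a $TB^\I$-sequence, fix a precompact $\tau$ with $u_n\overset{\I}{\longrightarrow}0$; by (a) $\tau\subseteq\sigma(G,H)$, and a topology finer than a Hausdorff one is Hausdorff, so $\sigma(G,H)$ is Hausdorff. Conversely, if $\sigma(G,H)$ is Hausdorff it is precompact and realizes the $\I$-convergence by (a), so $\uu$ is a $TB^\I$-sequence. Thus $\uu$ is $TB^\I$ if and only if $\sigma(G,H)$ is Hausdorff. Finally, $\sigma(G,H)$ is Hausdorff precisely when $H$ separates the points of $G$, and by the annihilator duality for the compact group $K=\mathrm{Hom}(G,\T)$ (whose dual is $G$), $H$ separates the points of $G$ if and only if its annihilator in $G$ is trivial, i.e. $H$ is dense in $K$. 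This yields the stated equivalence.

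The routine points are the subgroup property and the neighborhood-base computation; the one step demanding care is the maximality argument in (a), which relies on the full Comfort--Ross correspondence $\tau=\sigma(G,(G,\tau)^\wedge)$ for \emph{possibly non-Hausdorff} totally bounded topologies, so one must keep the distinction between ``totally bounded'' (as in (a)) and ``precompact $=$ Hausdorff totally bounded'' (as in the definition of $TB^\I$). The density-versus-separation equivalence in (b) is standard annihilator duality and needs only to be invoked (cf.\ \cite{ITG}).
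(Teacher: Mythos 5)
Your proof is correct and takes essentially the same route as the paper's: for (a) both arguments verify that $\I$-convergence holds in $\sigma(G,s^{\I}_\uu(K))$ (you via the canonical subbase and closure of $\I$ under finite unions, the paper via writing it as $\sup\{\sigma(G,\langle\chi\rangle):\chi\in s^{\I}_\uu(K)\}$ — the same computation) and obtain maximality from the representation $\tau=\sigma(G,(G,\tau)^\wedge)$ of totally bounded topologies combined with Fact~\ref{I-cont}. Part (b), which the paper dismisses as a direct consequence of (a), is correctly spelled out in your proposal through the standard chain Hausdorff $\Leftrightarrow$ $H$ separates points of $G$ $\Leftrightarrow$ $H$ dense in $K$, together with the precompact $=$ Hausdorff totally bounded convention, exactly as intended.
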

 \begin{proof}
(a) For every $\chi \in s^{{\cal I}}_\uu(K)$, $\uu$ ${\cal I}$-converges  to $0$ in the totally bounded group topology $\tau_\chi= \sigma(G,\langle \chi\rangle)$.
Since $\sigma(G,s^{{\cal I}}_\uu(K)) = \sup \{\tau_\chi: \chi \in s^{{\cal I}}_\uu(K))\}$, we deduce that $\uu$ ${\cal I}$-converges to $0$ in $\sigma(G,s^{{\cal I}}_\uu(K))$.
On the other hand, if $\sigma(G,H)$, with $H \leq K$, is an arbitrary totally bounded group topology on $G$ that makes $\uu$  ${\cal I}$-convergent  to $0$, then
for every $\chi \in H$ the continuity of $\chi\colon(G,\sigma(G,H))\to \T$ makes $(\chi(u_n))$
${\cal I}$-convergent  to $0$ in $\T$, so $\chi \in s^{{\cal I}}_\uu(K)$. This proves that $H \subseteq s^{{\cal I}}_\uu(K)$. Therefore, $\sigma(G,s^{{\cal I}}_\uu(K))$ is  the finest totally bounded group topology on $G$ in which $\uu$ ${\cal I}$-converges  to $0$.

(b) is a direct consequence of  (a).
\end{proof}

 As a consequence of Theorem~\ref{tb} and Remark~\ref{maximality}, we get that in case $\uu$ is a $TB^\I$-sequence, then $T_{\bf pu}^\I=\sigma(G,s^{{\cal I}}_\uu(G^\wedge))$.

\smallskip
If $(G,\tau)$ is a topological abelian group, $\tau^+$ denotes the totally bounded group topology $\sigma(G,(G,\tau)^\wedge)$. Of course, $\tau=\tau^+$
holds if and only if $\tau$ is totally bounded.

\begin{corollary}\label{1.1I}
Let $\I$ be a free ideal on $\N_0$ and let $\uu$ be a $TB^\I$-sequence in an abelian group $G$.
Then $$T^{{\cal I}}_{\mathbf p{\bf u}}(G) = T^{{\cal I}}_{{\bf u}}(G)^+=\sigma(G,s_\uu(G^\wedge))$$
and $s_{\uu}^\I(G^\wedge)=(G,T^\I_{\uu})^\wedge=(G,T^\I_{{\bf p}\uu})^\wedge$ holds.
\end{corollary}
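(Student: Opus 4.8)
The plan is to assemble the two asserted chains of equalities from the ingredients established immediately before the statement, namely the description of $T^{\I}_{\mathbf p\uu}$ coming from Theorem~\ref{tb}(a) together with Remark~\ref{maximality}, and the identification of the character group of $(G,T^{\I}_{\uu})$ coming from Theorem~\ref{cg}. Since $\uu$ is a $TB^{\I}$-sequence, it is in particular a $T^{\I}$-sequence, so both finest topologies $T^{\I}_{\uu}$ and $T^{\I}_{\mathbf p\uu}$ are defined and $T^{\I}_{\mathbf p\uu}\subseteq T^{\I}_{\uu}$ holds.

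First I would record the identity
$$T^{\I}_{\mathbf p\uu}(G)=\sigma\bigl(G,s^{\I}_{\uu}(G^\wedge)\bigr),$$
which is exactly the consequence of Theorem~\ref{tb}(a) and Remark~\ref{maximality} noted just above the statement: $\sigma(G,s^{\I}_{\uu}(G^\wedge))$ is the finest totally bounded group topology on $G$ making $\uu$ $\I$-converge to $0$, and for a $TB^{\I}$-sequence this is precisely $T^{\I}_{\mathbf p\uu}(G)$. Next I would invoke Theorem~\ref{cg} to obtain $(G,T^{\I}_{\uu})^\wedge=s^{\I}_{\uu}(G^\wedge)$. Substituting this into the previous display gives $T^{\I}_{\mathbf p\uu}(G)=\sigma\bigl(G,(G,T^{\I}_{\uu})^\wedge\bigr)=T^{\I}_{\uu}(G)^+$ by the very definition of $\tau^+$, which yields the first chain.

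For the second chain, I would take Pontryagin duals in $T^{\I}_{\mathbf p\uu}(G)=\sigma(G,s^{\I}_{\uu}(G^\wedge))$ and apply the standard fact that $H=(G,\sigma(G,H))^\wedge$ for every $H\le\mathrm{Hom}(G,\T)$ (Proposition~11.3.10 in~\cite{ITG}, recalled in the introduction) with $H=s^{\I}_{\uu}(G^\wedge)$; this gives $(G,T^{\I}_{\mathbf p\uu})^\wedge=s^{\I}_{\uu}(G^\wedge)$. Combining with $(G,T^{\I}_{\uu})^\wedge=s^{\I}_{\uu}(G^\wedge)$ from Theorem~\ref{cg} produces $s^{\I}_{\uu}(G^\wedge)=(G,T^{\I}_{\uu})^\wedge=(G,T^{\I}_{\mathbf p\uu})^\wedge$.

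The only delicate point I foresee is the applicability of Theorem~\ref{cg}: as stated it is phrased for a $T$-sequence, whereas here $\uu$ is merely a $TB^{\I}$-sequence and need not be a $T$-sequence. I expect this to be harmless, since the proof of Theorem~\ref{cg} uses only the existence of the finest Hausdorff topology $T^{\I}_{\uu}$ together with Fact~\ref{I-cont} and the maximality in Remark~\ref{maximality}, all of which hold for any $T^{\I}$-sequence; indeed Corollary~\ref{cor1_cg} already applies Theorem~\ref{cg} in exactly this generality. Should one wish to avoid this reliance entirely, the inclusion $(G,T^{\I}_{\uu})^\wedge\subseteq s^{\I}_{\uu}(G^\wedge)$ follows directly from Fact~\ref{I-cont}, while the reverse inclusion $(G,T^{\I}_{\uu})^\wedge\supseteq(G,T^{\I}_{\mathbf p\uu})^\wedge=s^{\I}_{\uu}(G^\wedge)$ follows from $T^{\I}_{\mathbf p\uu}\subseteq T^{\I}_{\uu}$ together with the dual computation above, closing the argument without invoking Theorem~\ref{cg}.
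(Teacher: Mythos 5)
Your proof is correct, and it reaches the two chains of equalities by a route that differs from the paper's in its middle step. The paper first proves $T^{\I}_{\mathbf p\uu}=(T^{\I}_{\uu})^+$ directly, without using Theorem~\ref{tb}: monotonicity of $\tau\mapsto\tau^+$ and precompactness give $T^{\I}_{\mathbf p\uu}=(T^{\I}_{\mathbf p\uu})^+\subseteq(T^{\I}_{\uu})^+$, while for the converse the join $T^{\I}_{\mathbf p\uu}\vee(T^{\I}_{\uu})^+$ is totally bounded and, being coarser than $T^{\I}_{\uu}$, still makes $\uu$ $\I$-converge to $0$ (Fact~\ref{I-cont}), so it equals $T^{\I}_{\mathbf p\uu}$ by the maximality in Remark~\ref{maximality}; only then does Theorem~\ref{cg} enter, rewriting $(T^{\I}_{\uu})^+=\sigma\bigl(G,(G,T^{\I}_{\uu})^\wedge\bigr)$ as $\sigma\bigl(G,s^{\I}_{\uu}(G^\wedge)\bigr)$, and the dual statement is read off from $(G,\tau)^\wedge=(G,\tau^+)^\wedge$. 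You instead take as starting point $T^{\I}_{\mathbf p\uu}=\sigma\bigl(G,s^{\I}_{\uu}(G^\wedge)\bigr)$, i.e., Theorem~\ref{tb}(a) combined with Remark~\ref{maximality} (a consequence the paper records in the text but does not use in this proof), and then apply Theorem~\ref{cg} to convert it into $(T^{\I}_{\uu})^+$; your dual statement comes from $H=(G,\sigma(G,H))^\wedge$ rather than $(G,\tau)^\wedge=(G,\tau^+)^\wedge$, an equivalent bookkeeping choice. What your route buys is that the lattice-theoretic work is absorbed into the already-proved Theorem~\ref{tb}(a); what the paper's route buys is independence of Theorem~\ref{tb}. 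Your concern about the hypothesis of Theorem~\ref{cg} is well founded: as stated it reads ``$T$-sequence'', but this is evidently a slip for ``$T^{\I}$-sequence'' (its proof only uses the existence and maximality of $T^{\I}_{\uu}$ plus Fact~\ref{I-cont}, and Corollary~\ref{cor1_cg} already invokes it in that generality), and your fallback --- obtaining $(G,T^{\I}_{\uu})^\wedge\subseteq s^{\I}_{\uu}(G^\wedge)$ from Fact~\ref{I-cont} and the reverse inclusion from $(G,T^{\I}_{\mathbf p\uu})^\wedge=s^{\I}_{\uu}(G^\wedge)$ together with $T^{\I}_{\mathbf p\uu}\subseteq T^{\I}_{\uu}$ --- in fact makes the whole corollary independent of Theorem~\ref{cg}. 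One cosmetic point: the displayed formula in the statement reads $\sigma(G,s_{\uu}(G^\wedge))$; the intended subgroup is $s^{\I}_{\uu}(G^\wedge)$, as you correctly write.
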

\begin{proof}
Clearly, $T^\I_{\uu}\supseteq T^\I_{{\bf p}\uu}$ and hence
$(T^\I_{\uu})^+\supseteq (T^\I_{{\bf p}\uu})^+=T^\I_{{\bf p}\uu}$, where the last equality holds since $T^\I_{{\bf p}\uu}$ is precompact.
By the maximality of $T_{{\bf p}\uu}^\I$, since $T_{{\bf p}\uu}^\I\vee (T_{\uu}^\I)^+$ is totally bounded, we have $T_{{\bf p}\uu}^\I=T_{{\bf p}\uu}^\I\vee (T_{\uu}^\I)^+$ and hence $T_{{\bf p}\uu}^\I= (T_{\uu}^\I)^+$. By Theorem~\ref{cg}, $(G,T^\I_{\uu})^\wedge=s_{\uu}^\I(G^\wedge)$ holds, so the first assertion follows.
The second assertions follows from the first and the fact that $(G,\tau)^\wedge=(G,\tau^+)^\wedge$.
\end{proof}

\subsection{$\I$-characterized subgroups of compact abelian groups are $F_{\sigma\delta}$-sets}\hfill

For a topological abelian group $(G,\tau)$, the character group $G^\wedge$ can be considered as a subset of the compact group ${\rm Hom}(G,\T)$ (i.e., the character group of the discrete abelian group $G$). Since on compact groups, the Haar measure is available, it is of importance to know whether $G^\wedge$ is a measurable subset. In general, this is not true: take for example a non-measurable subgroup $D$ of $\T$ and consider the group
$(\Z,\sigma(\Z,D))$; of course, the character group $D$ is not measurable in ${\rm Hom}(\Z,\T)\cong \T$.
We will see now, that under some restrictive assumption on the ideal $\I$, for a compact abelian group $K$ and its discrete dual group $K^\wedge$ we have that the character group of $(K^\wedge,T^\I_{\uu})^\wedge$, i.e., the group $s_{\uu}^\I(K)$, is not only a measurable subgroup of $K$ but even a $F_{\sigma\delta}$ set.

First we establish the conditions which the ideal has to satisfy. Recall that an ideal $\I$ is a subset of the power set of $\N_0$ which can be identified with $\{0,1\}^{\N_0}$. An ideal is called {\bf analytic} if, considered as a subspace of the compact metric space $\{0,1\}^{\N_0}$ (with the product topology), it  is an analytic set, i.e., a continuous image of a Borel set.
Moreover, a function $\phi\colon\mathcal{P}(\N_0)\to [0,\infty]$ is called a {\bf submeasure} if
$\phi(\emptyset)=0$,
$\phi$ is monotone, (i.e., for all $A\subseteq B \subseteq \N_0$ one has $\phi(A)\le  \phi(B)$), and $\phi$ is
subadditive, (i.e., for all $A,B\subseteq \N_0$ one has $\phi(A\cup B)\le \phi(A)+\phi(B)$).
The submeasure $\phi$ is called {\bf finite} if $\phi(\N_0)<\infty$.

Let $X$ be a topological space; a function $f\colon X\to [0,\infty]$ is called {\bf lower semicontinuous} if for every sequence $(x_n)$ in $X$ converging to $x\in X$ it follows that $\liminf f(x_n)\ge f(x)$.
For a lower semicontinuous submeasure $\phi$ Solecki (\cite[p.52]{Sol}) defined the {\bf exhaustive ideal}
$${\rm Exh}(\phi)=\{A\subseteq \N_0:\ \lim_{n\to\infty}\phi(A\setminus \{0,1,\ldots,n\})=0\}.$$ It is straightforward to check that ${\rm Exh}(\phi)$ is an ideal; moreover it is an $F_{\sigma\delta}$-set and a $P$-ideal {\cite[Lemma~1.2.2]{Fa}}.
By~\cite[Theorem~3.1]{Sol}, every analytic $P$-ideal on $\N_0$ is of the form ${\rm Exh}(\phi)$ for a finite lower semicontinuous submeasure $\phi$.

First of all, $\FF in$ is an analytic free $P$-ideal.
According to~\cite[Proposition~3.25]{DRAH} (see also~\cite[Proposition~1.1]{BDFS}), also $\I_d$ is an analytic free $P$-ideal.
For other examples of analytic $P$-ideals see~\cite{Fa} (e.g., summable ideals).

\begin{theorem}\label{sd}
Let $\I$ be an analytic $P$-ideal on $\N_0$, $K$ a compact abelian group, and $\uu$ a sequence in the discrete abelian group $K^\wedge$. Then ${s}^{{\cal I}}_{{\bf u}}(K)$ is an $F_{\sigma\delta}$-set, in particular a Borel subset of $K$.
\end{theorem}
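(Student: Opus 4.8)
The plan is to reduce $\I$-convergence to countably many \emph{closed} conditions and then read off the $F_{\sigma\delta}$-structure directly from Solecki's representation of analytic $P$-ideals. Since $\I$ is an analytic $P$-ideal, I would first invoke \cite[Theorem~3.1]{Sol} to write $\I={\rm Exh}(\phi)$ for a finite lower semicontinuous submeasure $\phi$ on $\N_0$. Because $\{\T_k:k\in\N\}$ is a neighborhood base at $0_\T$ and $\I$ is closed under subsets, membership $x\in s^\I_\uu(K)$ is equivalent to requiring, for every $k\in\N$, that $A_k(x)=\{n\in\N_0:u_n(x)\notin\T_k\}$ lie in $\I={\rm Exh}(\phi)$. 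Unfolding the definition of ${\rm Exh}(\phi)$, and using that $m\mapsto\phi(A_k(x)\setminus[0,m])$ is nonincreasing (so its limit equals its infimum), this becomes the countable formula
\[
x\in s^\I_\uu(K)\iff \forall k\in\N\ \forall j\in\N\ \exists m\in\N:\ \phi\bigl(\{n>m : u_n(x)\notin\T_k\}\bigr)\le \tfrac1j .
\]

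Next I would show that for fixed $k,j,m$ the set $C_{k,j,m}=\{x\in K : \phi(B_{k,m}(x))\le 1/j\}$, where $B_{k,m}(x)=\{n\in\N_0: n>m,\ u_n(x)\notin\T_k\}$, is closed. Here the lower semicontinuity of $\phi$ is essential: combined with monotonicity it yields $\phi(A)=\sup\{\phi(F):F\subseteq A\text{ finite}\}$ for every $A\subseteq\N_0$, which one gets by applying lower semicontinuity to the truncations $A\cap[0,\ell]\to A$ as $\ell\to\infty$. Consequently the super-level condition $\phi(B_{k,m}(x))>1/j$ holds \emph{if and only if} some finite $F\subseteq B_{k,m}(x)$ satisfies $\phi(F)>1/j$, which turns the complement of $C_{k,j,m}$ into the union, over all finite $F\subseteq\{n\in\N_0:n>m\}$ with $\phi(F)>1/j$, of the sets $\bigcap_{n\in F}\{x:u_n(x)\notin\T_k\}$. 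Since each character $u_n\colon K\to\T$ is continuous and $\T_k$ is closed, every $\{x:u_n(x)\notin\T_k\}$ is open; hence the complement of $C_{k,j,m}$ is open and $C_{k,j,m}$ is closed, as required.

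Finally I would assemble the pieces. The displayed equivalence gives
\[
s^\I_\uu(K)=\bigcap_{k\in\N}\bigcap_{j\in\N}\bigcup_{m\in\N}C_{k,j,m},
\]
where each inner union $\bigcup_{m}C_{k,j,m}$ is $F_\sigma$, so the countable intersection over $k$ and $j$ is $F_{\sigma\delta}$, and in particular Borel. I expect the only real subtlety to be the bookkeeping of strict versus non-strict inequalities: one must phrase ${\rm Exh}(\phi)$-membership with the non-strict threshold $\le 1/j$ so that the level sets $C_{k,j,m}$ come out \emph{closed}, while simultaneously exploiting lower semicontinuity through the strict inequality $\phi(F)>1/j$ on finite sets to make the complements \emph{open}. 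Everything else—the reduction to the base $\{\T_k\}$ and the continuity of the $u_n$—is routine.
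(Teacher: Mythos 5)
Your proof is correct, and it rests on the same pillars as the paper's own argument: Solecki's theorem to write $\I=\mathrm{Exh}(\phi)$ for a finite lower semicontinuous submeasure, unfolding the $\mathrm{Exh}(\phi)$-condition over the neighborhood base $\{\T_k\}$, lower semicontinuity of $\phi$ to manufacture closed sets, and quantifier counting to land in $F_{\sigma\delta}$. The route differs in the decomposition, and yours is genuinely leaner. Exploiting that $m\mapsto\phi(A\setminus[0,m])$ is nonincreasing, you collapse the limit quantifiers to ``$\forall k\,\forall j\,\exists m$'' and obtain the three-layer formula $s^{\I}_\uu(K)=\bigcap_{k}\bigcap_{j}\bigcup_{m}C_{k,j,m}$, whereas the paper keeps the full ``$\forall m\,\exists N\,\forall j\ge N$'' and additionally splits each tail condition into finite windows, ending with the five-layer expression $\bigcap_k\bigcap_m\bigcup_N\bigcap_{j\ge N}\bigcap_{\nu\ge j+1}C_{k,m,j,\nu}$. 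The price of your shortcut is that your basic sets $C_{k,j,m}$ are level sets of $\phi$ evaluated on an \emph{infinite} index set, so their closedness is not immediate; you settle it with the identity $\phi(A)=\sup\{\phi(F):F\subseteq A\ \mbox{finite}\}$, correctly derived from lower semicontinuity applied to the truncations $A\cap[0,\ell]\to A$ together with monotonicity. This is precisely the fact the paper invokes at its step $(\ast)$ (quoted there via $\sigma$-subadditivity,~\cite[Lemma 3.20]{DRAH}) in order to pass to finite windows, after which the closedness of each finite-window set $C_{k,m,j,\nu}$ follows from the same elementary neighborhood argument you use: continuity of the characters, closedness of $\T_k$, and monotonicity of $\phi$. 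So the two proofs are mathematically equivalent, with the key lsc ingredient merely deployed at different points: yours buys a shorter formula and fewer intersections by stating the finite-approximation property of lsc submeasures explicitly, while the paper's finite-window version keeps every closedness check concerned with only finitely many characters at a time. Your care with the strict versus non-strict thresholds ($\le 1/j$ to make the level sets closed, $>1/j$ on finite subsets to make the complements open) is exactly the right subtlety and matches the paper's choice.
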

\begin{proof}
By~\cite[Theorem~3.1]{Sol}, ${\cal I}={\rm Exh}(\phi)$, where $\phi\colon{\cal P}(\N_0)\to [0,\infty[$ is a lower semicontinuous submeasure. Then
{\allowdisplaybreaks \begin{align*}
{\bf s}^{{\cal I}}_{{\bf u}}(K)
&=\{\chi\in K:\ \chi(u_n)\stackrel{{\cal I}}{\longrightarrow}0_\T\}=
\{\chi\in K:\ \forall k\in\N \ \{n\in\N_0: \chi(u_n)\notin \T_k\}\in {\cal I}\}=\\
&=\dis \bigcap_{k\in\N}\{\chi\in K:\{n\in\N_0: \chi(u_n)\notin \T_k\}\in {\cal I}\}=
\dis \bigcap_{k\in\N}\{\chi\in K:\{n\in\N_0: \chi(u_n)\notin \T_k\}\in {\rm Exh}(\phi)\}=\\
 &=\dis \bigcap_{k\in\N}\{\chi\in K:\lim_{j\to \infty}\phi(\{n\in\N_0: \chi(u_n)\notin \T_k\}\setminus [0,j])=0\}=\\
&=\dis \bigcap_{k\in\N}\left\{\chi\in K:\forall m\in\N\ \exists N\in\N\ \forall j\ge N \phi(\{n\in\N_0: \chi(u_n)\notin \T_k\}\setminus [0,j])\le \frac{1}{m}\right\}=\\
&=\dis \bigcap_{k\in\N} \bigcap_{m\in\N}\bigcup_{N\in\N}\bigcap_{j\ge N}\left \{\chi\in K:\phi(\{n>j: \chi(u_n)\notin \T_k\})\le \frac{1}{m}\right\}=\\
\stackrel{(\ast)}{=}&\dis \bigcap_{k\in\N} \bigcap_{m\in\N}\bigcup_{N\in\N}\bigcap_{j\ge N}\left\{\chi\in K: \forall \nu\ge j+1\ \phi(\{n\in \N_0: j< n\le \nu\ \wedge\  \chi(u_n)\notin \T_k\} )\le \frac{1}{m}\right\}=\\
&=\dis \bigcap_{k\in\N} \bigcap_{m\in\N}\bigcup_{N\in\N}\bigcap_{j\ge N}\bigcap_{\nu\ge j+1}\underbrace{\left\{\chi\in K:\phi(\{n\in \N_0:\ j+1\le n\le \nu\ \wedge\  \chi(u_n)\notin \T_k\})\le\frac{1}{m}\right\}}_{=:C_{k,m,j,\nu}};\\
\end{align*}}
here $(\ast)$ holds, since $\phi$ is lower semicontinuous and hence $\sigma$-subadditive (cf.,~\cite[Lemma 3.20]{DRAH}).

We are going to show that $C=C_{k,m,j,\nu}$ is closed.  Therefore, we look at the complement
$$K\setminus C= \left\{\chi\in K:\phi(\{n\in \N_0:\ j+1\le n\le \nu\ \wedge\  \chi(u_n)\notin \T_k\})>  \frac{1}{m}\right\}.$$
So fix $\chi\in K\setminus C$, and let
$$\{n_1<n_2<\ldots<n_\mu\}=\{n\in \N_0:\ j+1\le n\le \nu\ \wedge\  \chi(u_n)\notin \T_k\}.$$
Since $\T_k$ is closed,
there is a neighborhood $W$ of $\chi$ such that for all $\psi\in W$ we have
$\psi(u_{n_i})\notin \T_k$ for $1\le i\le \mu$. So, for all $\psi\in W$, we have
$$\{n_1,\ldots,n_\mu\}\subseteq \{n\in \N_0: j+1\le n\le \nu\ \wedge\  \psi(u_n)\notin \T_k\}$$ and hence $\phi(\{n\in \N_0:\ j+1\le n\le \nu\ \wedge\  \psi(u_n)\notin \T_k\})>\frac{1}{m}$. So $W\subseteq K\setminus C$, which shows that $C$ is closed.
\end{proof}

We do not know whether Theorem \ref{sd} remains true without the assumption that $\I$ is analytic. More precisely: 

\begin{question}\label{ques:sd}
(a) Do there exist a free $P$-ideal $\I$ on $\N_0$ and a sequence $\uu$ in $\N$ such that ${s}^{{\cal I}}_{{\bf u}}(\T)$ is not a Borel set? And with ${s}^{{\cal I}}_{{\bf u}}(\T)$ Borel but not $F_{\sigma\delta}$?

(b) Do there exist a compact abelian group $K$, a sequence $\uu$ in the discrete abelian group $K^\wedge$ and a free ($P$-ideal) $\I$ on $\N_0$ such that ${s}^{{\cal I}}_{{\bf u}}(K)$ is not an $F_{\sigma\delta}$-set 
of $K$? Could ${s}^{{\cal I}}_{{\bf u}}(K)$ be Borel but not $F_{\sigma\delta}$?

(c) For a maximal $P$-ideal $\I$ on $\N_0$, a compact abelian group $K$ and a sequence $\uu\in K^\wedge$, is ${s}^{{\cal I}}_{{\bf u}}(K)$ an $F_{\sigma\delta}$-set?
\end{question}

Note that a maximal free ideal on $\N_0$ cannot be analytic, as it is known that no non-principal ultrafilter can be analytic (see \cite{Medini} and \cite[Theorem~21.6]{Kechris}).


\begin{remark}
 Let $\I$ be an analytic $P$-ideal on $\N_0$, $K$ a compact abelian group with its Haar measure $\mu$, and $\uu$ a one-to-one sequence in $K^\wedge$.
By Theorem~\ref{sd}, $s_\uu(K)$ is a Borel set, so measurable.

Clearly, $\mu({s}^{{\cal I}}_{{\bf u}}(K))=0$ precisely when $s^{{\cal I}}_{{\bf u}}(K)$ has infinite index in $K$.
Moreover,~\cite[Lemma~1.2]{DK} shows that in case $H$ is a finite-index subgroup of $K$, then $H=s_\uu(K)$ where $\uu$ is a sequence such that each character in the annihilator of $H$ (which is finite) is listed infinitely many times.
\end{remark}

Now we reformulate Theorem~\ref{sd} in the setting that we used above in all the results of this paper, where $G$ is an abelian group and $\uu$ a sequence in $G$. In this case
we identify $G$ with the discrete dual of its compact dual $G^\wedge$, and write ${s}_{{\bf u}}(G^\wedge)$ for a sequence $\uu$ in $G$.

\begin{corollary}
Let $\I$ be an analytic $P$-ideal on $\N_0$ and $\uu$ a sequence in an abelian group $G$. Then $s^{{\cal I}}_{{\bf u}}(G^\wedge)$ is an $F_{\sigma\delta}$-set, in particular a Borel subset of $G^\wedge$.
\end{corollary}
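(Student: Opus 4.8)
The plan is to deduce this corollary directly from Theorem~\ref{sd} by Pontryagin duality, so that essentially no new work is required beyond installing the correct identification. First I would set $K={\rm Hom}(G,\T)=G^\wedge$, regarding $G$ as discrete; then $K$ is a compact abelian group, which is precisely the setting of Theorem~\ref{sd}.

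Next I would invoke Pontryagin duality for the discrete group $G$: the canonical evaluation map $G\to (G^\wedge)^\wedge=K^\wedge$ is an isomorphism, so the sequence $\uu$ in $G$ becomes, under this identification, a sequence in $K^\wedge$, which is exactly the datum required by Theorem~\ref{sd} (a sequence in the discrete dual of $K$). This matches the reformulation stated just before the corollary, where $G$ is identified with the discrete dual of its compact dual $G^\wedge$.

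The one point to verify — and the only place where anything needs checking — is that the two notions of $s^{{\cal I}}_{{\bf u}}$ agree under this identification. Since for $u_n\in G$ and $\chi\in K=G^\wedge$ the evaluation pairing gives $u_n(\chi)=\chi(u_n)$, we obtain ${s}^{{\cal I}}_{{\bf u}}(K)=\{\chi\in K:\ u_n(\chi)\stackrel{{\cal I}}{\longrightarrow}0_\T\}=\{\chi\in G^\wedge:\ \chi(u_n)\stackrel{{\cal I}}{\longrightarrow}0_\T\}={s}^{{\cal I}}_{{\bf u}}(G^\wedge)$, which is exactly the set appearing in the statement. With this identification in hand, Theorem~\ref{sd} applied to the compact abelian group $K=G^\wedge$ and the sequence $\uu\in K^\wedge$ yields that ${s}^{{\cal I}}_{{\bf u}}(K)={s}^{{\cal I}}_{{\bf u}}(G^\wedge)$ is an $F_{\sigma\delta}$-set, hence a Borel subset, of $K=G^\wedge$.

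I expect no genuine obstacle: the entire content lies in Theorem~\ref{sd}, and the corollary is simply a translation of that result from the ``$K$ compact, $\uu$ in $K^\wedge$'' formulation into the ``$G$ abelian, $\uu$ in $G$'' formulation used throughout the paper. The only care needed is bookkeeping with the double-dual identification and confirming that $\I$ being an analytic $P$-ideal is preserved verbatim (it is, since the hypothesis is a condition on $\I$ alone and is unchanged by the reformulation).
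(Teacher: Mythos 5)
Your proposal is correct and coincides with the paper's own (implicit) proof: the authors obtain this corollary from Theorem~\ref{sd} by exactly the identification you describe, namely viewing $G$ as the discrete dual of the compact group $K=G^\wedge$ via Pontryagin duality, under which $s^{{\cal I}}_{{\bf u}}(K)=s^{{\cal I}}_{{\bf u}}(G^\wedge)$. Your explicit check that the evaluation pairing makes the two notions of $s^{{\cal I}}_{{\bf u}}$ agree is the only content needed, and you supply it.
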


\end{document}